\newtheorem{theorem}{Theorem}[section]
\newtheorem{definition}[theorem]{Definition}
\newtheorem{example}[theorem]{Example}
\newtheorem{lemma}[theorem]{Lemma}
\newtheorem{proposition}[theorem]{Proposition}
\newtheorem{corollary}[theorem]{Corollary}
\newtheorem{remark}[theorem]{Remark}
\begin{document}


\title{$M$-ideals of homogeneous polynomials}

\author{Ver\'{o}nica Dimant}

\thanks{Partially supported by ANPCyT PICT 05 17-33042. }

\address{Departamento de Matem\'{a}tica, Universidad de San
Andr\'{e}s, Vito Dumas 284, (B1644BID) Victoria, Buenos Aires,
Argentina and CONICET} \email{vero@udesa.edu.ar}

\subjclass[2000]{46G25,46B04,47L22,46B20} \keywords{$M$-ideals,
homogeneous polynomials, weakly continuous on bounded sets
polynomials, block diagonal polynomials}

\begin{abstract}
We study the problem of whether $\mathcal{P}_w(^nE)$, the space of
$n$-homogeneous polynomials which are weakly continuous on bounded
sets, is an $M$-ideal in the space of continuous $n$-homogeneous
polynomials $\mathcal{P}(^nE)$. We obtain conditions that assure
this fact and present some examples. We prove that if
$\mathcal{P}_w(^nE)$ is an $M$-ideal in $\mathcal{P}(^nE)$, then
$\mathcal{P}_w(^nE)$ coincides with $\mathcal{P}_{w0}(^nE)$
($n$-homogeneous polynomials that are weakly continuous on bounded
sets at 0). We introduce a polynomial version of property $(M)$ and
derive that if $\mathcal{P}_w(^nE)=\mathcal{P}_{w0}(^nE)$ and
$\mathcal{K}(E)$ is an $M$-ideal in $\mathcal{L}(E)$, then
$\mathcal{P}_w(^nE)$ is an $M$-ideal in $\mathcal{P}(^nE)$. We also
show that if
$\mathcal{P}_w(^nE)$ is an $M$-ideal in $\mathcal{P}(^nE)$, then the
set of $n$-homogeneous polynomials whose Aron-Berner extension do
not attain the norm is nowhere dense in $\mathcal{P}(^nE)$. Finally,
we face an analogous $M$-ideal problem for block diagonal
polynomials.
\end{abstract}

\maketitle

\section*{Introduction}

In the theory of Banach spaces, the concept of $M$-ideal, since its
introduction by Alfsen and Effros \cite{AE}, proves to be an
important tool to study geometric and isometric properties of the
spaces. As it is quoted in the  exhaustive book on the topic written
by Harmand, Werner and Werner \cite{HWW}:

\begin{quotation}
``The fact that $Y$ is an $M$-ideal in $X$ has a strong impact on
both $Y$ and $X$ since there are a number of important properties
shared by $M$-ideals, but not by arbitrary subspaces.''
\end{quotation}

Being the space of compact operators between Banach spaces $X$ and
$Y$, $\mathcal{K}(X,Y)$, a distinguished subspace of the space of
linear operators $\mathcal{L}(X,Y)$, many specialists have been
interested in characterizing when $\mathcal{K}(X,Y)$ is an $M$-ideal
in $\mathcal{L}(X,Y)$  and deriving properties from this fact (see,
for example \cite{HWW,H,K,KW,L,OW}).

When changing to the polynomial setting, the role of ``compact
operators'' is usually played by the homogeneous polynomials which
are ``weakly continuous on bounded sets'' (that is, polynomials that send
bounded weak convergent nets into convergent nets). Recall that ``compact linear operators'' are the same as ``weakly  continuous on bounded sets linear operators''. For polynomials, these two concepts do not coincide, because continuous polynomials are not necessarily weak-to-weak continuous. Moreover, every scalar valued continuous polynomial is compact but not every scalar valued continuous polynomial is weakly continuous on bounded sets.
Many authors have studied
for which values of $n$, for a fixed Banach space $E$, all the
continuous $n$-homogeneous polynomials on $E$ are weakly continuous
on bounded sets (see, for instance \cite{AAD,BR-98,CGG,DD, Fa,Go,GJ,
Ry-79,Ry-80}). We are interested here, when this is not the case, to
determine if there is an $M$-structure.

\bigskip
Let us recall the definition and some facts that we use about
$M$-ideals. Naturally, our guide in this topic is the book
\cite{HWW}.

\begin{definition}
A closed subspace $J$ of a Banach space $X$ is an {\bf $M$-ideal} in
$X$ if
$$
X^*=J^\perp \oplus_1 J^\sharp,
$$

where $J^\perp$ is the annihilator of $J$ and $J^\sharp$ is a closed
subspace of $X^*$.
\end{definition}

When $J$ is an $M$-ideal in $X$, then
$$
J^\sharp=\{x^*\in X^*: \|x^*\|=\|x^*|_{J}\|\},
$$
and so $J^\sharp$ can be (isometrically) identified with $J^*$.
Thus, we usually denote $X^*=J^\perp \oplus_1 J^*$. Another relevant
result that we want to recall is that,  being $J$  an $M$-ideal in
$X$, we have the following equality about sets of extreme points of
 unit balls:
$$
Ext(B_{X^*})=Ext(B_{J^\perp})\cup Ext(B_{J^*}).
$$

Also, recall the 3-ball property \cite[Theorem I.2.2 (iv)]{HWW} that
we will use repeatedly: a closed subspace $J$ is an $M$-ideal in $X$
if and only if for every $y_1,y_2,y_3\in B_J$, $x\in B_X$ and
$\varepsilon >0$, there exists $y\in J$ satisfying
$$
\|x+y_i-y\|\leq 1+\varepsilon, \qquad i=1,2,3.
$$

\bigskip
Throughout this paper $E$ will denote a Banach space over
$\mathbb{K}$, where $\mathbb{K}=\mathbb{R}$ or $\mathbb{C}$. The closed unit ball of $E$ will be noted by $B_E$ and the unit sphere by $S_E$.
As
usual, $\mathcal{L}(E)$ and $\mathcal{K}(E)$ will be the notations
for the spaces of continuous linear and compact operators from $E$
to $E$.
 $\mathcal{P}(^nE)$ will denote the space of all continuous
$n$-homogeneous polynomials from $E$ to $\mathbb{K}$, which is a Banach space with the norm
$$
\|P\|=\sup\{|P(x)|:\, x\in B_E\}.
$$ If $P\in
\mathcal{P}(^nE)$, there exists a unique symmetric $n$-linear
mapping $\overset\vee P\colon\underbrace{E\times\cdots\times E}_n\to
\mathbb{K}$ such that
$$P(x)=\overset\vee P(x,\dots,x).$$

For a polynomial $P\in \mathcal{P}(^nE)$, its Aron-Berner extension
to $E^{**}$ will be noted by $\overline{P}$. Also, for each $z\in E^{**}$, $e_z$ will refer to the application given by $e_z(P)=\overline{P}(z).$

We will denote by $\mathcal{P}_{w}(^nE)$ the space of $n$-homogeneous
polynomials on $E$ that are weakly continuous on bounded sets (equivalently, that sent bounded weak convergent nets into convergent nets) and by
$\mathcal{P}_{w0}(^nE)$ the space of $n$-homogeneous polynomials on
$E$ that are weakly continuous on bounded sets at 0 (equivalently, that sent bounded weakly null nets into null nets).
Polynomials of the form
$P=\sum_{j=1}^N \pm  \gamma_j^n$, with $\gamma_j\in E^*$, are said to be of finite type. The space of finite type $n$-homogeneous polynomials  on $E$ will be denoted by
 $\mathcal{P}_f(^nE)$  and its closure (in the polynomial norm), which is the space of approximable $n$-homogeneous polynomials  on $E$ will be noted by $\mathcal{P}_A(^nE)$.

Recall that if $E$ does not contain a subspace isomorphic to
$\ell_1$, then $\mathcal{P}_w(^nE)$ coincides with the space of
weakly sequentially continuous $n$-homogeneous polynomials on $E$
 \cite{AHV}.

We refer to \cite{Di} for the necessary background on polynomials.

\bigskip
The paper is organized as follows. In section 1 we present some
general results and consequences of the fact that
$\mathcal{P}_w(^nE)$ is an $M$-ideal in $\mathcal{P}(^nE)$. The main
implication is that for this to happen it is necessary that
$\mathcal{P}_w(^nE)=\mathcal{P}_{w0}(^nE)$. We also obtain a
Bishop-Phelps type result for polynomials. In section 2 we state a
set of conditions that assure that $\mathcal{P}_w(^nE)$ is an
$M$-ideal in $\mathcal{P}(^nE)$. This enable us to produce some nice
examples of this fact. Section 3 is devoted to a polynomial version
of property $(M)$. With this property we can make a link with the
linear theory obtaining that if
$\mathcal{P}_w(^nE)=\mathcal{P}_{w0}(^nE)$ and $\mathcal{K}(E)$ is
an $M$-ideal in $\mathcal{L}(E)$, then $\mathcal{P}_w(^nE)$ is an
$M$-ideal in $\mathcal{P}(^nE)$. In the last section we consider the
space of block diagonal polynomials (with respect to a fixed
sequence of integers $J$) that are defined on a Banach space $E$
with an unconditional finite dimensional decomposition. We study in
this context the same question as before, that is whether the space of
weakly continuous on bounded sets block diagonal polynomials results an
$M$-ideal in the space of block diagonal polynomials.

\section{General results}

We begin by stating some easy results which are polynomial analogous
to \cite[Propositions VI.4.2 and VI.4.3]{HWW}. Their proofs are
straightforward.

First, recall that $J$ is an {\bf $M$-summand} in $X$ if
$$
X=J\oplus_\infty \widehat{J},
$$
where $\widehat{J}$ is a closed subspace of $X$. Clearly
$M$-summands are $M$-ideals.

\begin{proposition} \label{M-sumando}
If $\mathcal{P}_w(^nE)$ is an $M$-summand in $\mathcal{P}(^nE)$,
then $\mathcal{P}_w(^nE)=\mathcal{P}(^nE)$.
\end{proposition}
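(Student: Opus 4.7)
My plan is to mimic the classical argument that shows $\mathcal{K}(X,Y)$ being an $M$-summand in $\mathcal{L}(X,Y)$ forces equality, replacing norm-one rank-one operators by $n$-homogeneous polynomials of the form $\gamma^n$ with $\gamma\in E^*$. Such polynomials belong to $\mathcal{P}_f(^nE)\subseteq \mathcal{P}_A(^nE)\subseteq\mathcal{P}_w(^nE)$ and supply a ``rich enough family'' inside $\mathcal{P}_w(^nE)$ to detect a would-be non-zero element of a complement.

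So, assume for contradiction that $\mathcal{P}(^nE)=\mathcal{P}_w(^nE)\oplus_\infty W$ with $W\neq\{0\}$ and pick $P\in W$ with $\|P\|=1$. Fix a small $\varepsilon>0$ and choose $x_0\in S_E$ with $|P(x_0)|>1-\varepsilon$. By Hahn--Banach, pick $\gamma\in S_{E^*}$ with $\gamma(x_0)=1$, so that $\gamma^n\in\mathcal{P}_w(^nE)$ has norm $1$ and $\gamma^n(x_0)=1$. Now replace $P$ by $\mu P$, where $\mu\in\mathbb{K}$ is a unimodular scalar chosen so that $\mu P(x_0)=|P(x_0)|$; since $W$ is a subspace, $\mu P\in W$ and $\|\mu P\|=1$.

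Direct evaluation at $x_0$ then gives
\[
\|\mu P+\gamma^n\|\ \geq\ |\mu P(x_0)+\gamma^n(x_0)|\ =\ |P(x_0)|+1\ >\ 2-\varepsilon,
\]
whereas the (unique) decomposition of $\mu P+\gamma^n$ into the summand $\gamma^n\in\mathcal{P}_w(^nE)$ and the summand $\mu P\in W$, together with the $\ell_\infty$-sum structure of an $M$-summand, yields
\[
\|\mu P+\gamma^n\|\ =\ \max\bigl(\|\gamma^n\|,\|\mu P\|\bigr)\ =\ 1.
\]
Choosing $\varepsilon<1$ produces the desired contradiction, forcing $W=\{0\}$ and hence $\mathcal{P}_w(^nE)=\mathcal{P}(^nE)$.

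The only point that needs a mild word of care is the phase adjustment, which is trivial over $\mathbb{R}$ (take $\mu=\pm 1$) but requires a genuine unimodular scalar over $\mathbb{C}$; beyond that the argument is essentially routine once one recognizes $\gamma^n$ as the correct polynomial analogue of a norm-one rank-one operator. No approximation of $P$ by polynomials in $\mathcal{P}_w(^nE)$ is needed, nor any use of the $3$-ball property: the stronger $M$-summand hypothesis is exactly what allows this one-shot contradiction.
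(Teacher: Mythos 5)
Your argument is correct: $\gamma^n$ is indeed a norm-one element of $\mathcal{P}_w(^nE)$ peaking at $x_0$, and evaluating $\mu P+\gamma^n$ at $x_0$ contradicts the $\ell_\infty$-norm of the direct sum, which forces $W=\{0\}$. The paper omits the proof (declaring it a straightforward analogue of \cite[Proposition VI.4.2]{HWW}), but your reasoning is exactly the intended one; it also mirrors the argument the paper does write out for the block-diagonal version in Section 4, where the role of $\gamma^n$ is played by $P\circ\pi_{m_N}$.
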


\begin{proposition} \begin{enumerate}
\item[(a)] If $\mathcal{P}_w(^nE)$ is an $M$-ideal in $\mathcal{P}(^nE)$ and $F\subset E$ is an 1-complemented subspace, then
$\mathcal{P}_w(^nF)$ is an $M$-ideal in $\mathcal{P}(^nF)$.

\item[(b)] The class of Banach spaces $E$ for which $\mathcal{P}_w(^nE)$ is an $M$-ideal in
$\mathcal{P}(^nE)$ is closed with respect to the Banach-Mazur
distance.
\end{enumerate}
\end{proposition}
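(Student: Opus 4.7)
My plan is to verify, in each case, the 3-ball property recalled in the introduction. The entire argument rests on one simple observation: every bounded linear map is weak-to-weak continuous, so precomposition with such a map sends $\mathcal{P}_w$ into $\mathcal{P}_w$. This lets me transport the 3-ball property between the spaces involved.

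For (a), I would fix a norm-one projection $\pi\colon E\to F$ and consider the lifting $\Phi\colon \mathcal{P}(^nF)\to \mathcal{P}(^nE)$ defined by $\Phi(Q)=Q\circ\pi$. Since $\pi|_F=\mathrm{id}_F$ and $\|\pi\|=1$, $\Phi$ is a linear isometry, and by the observation above it carries $\mathcal{P}_w(^nF)$ into $\mathcal{P}_w(^nE)$. Given data $Q_1,Q_2,Q_3\in B_{\mathcal{P}_w(^nF)}$, $P\in B_{\mathcal{P}(^nF)}$ and $\varepsilon>0$, I would apply the 3-ball property in $\mathcal{P}(^nE)$ to $\Phi(P)$ and the $\Phi(Q_i)$, obtain $R\in \mathcal{P}_w(^nE)$ with $\|\Phi(P)+\Phi(Q_i)-R\|\le 1+\varepsilon$, and then set $Q=R|_F$. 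Since the inclusion $F\hookrightarrow E$ is weak-to-weak continuous, restriction sends wcb polynomials to wcb polynomials, so $Q\in \mathcal{P}_w(^nF)$; evaluating the displayed inequality on $B_F$ gives $\|P+Q_i-Q\|\le 1+\varepsilon$, as required.

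For (b), suppose $(E_k)_k$ is a sequence of spaces in the class with $d(E_k,E)\to 1$. Given data $Q_1,Q_2,Q_3\in B_{\mathcal{P}_w(^nE)}$, $P\in B_{\mathcal{P}(^nE)}$ and $\varepsilon>0$, I would first choose $\delta>0$ with $(1+\delta)^n(1+\varepsilon/2)\le 1+\varepsilon$, then pick $k$ and an isomorphism $T\colon E\to E_k$ with $\|T^{-1}\|=1$ and $\|T\|\le 1+\delta$ (rescale any isomorphism with $\|T\|\|T^{-1}\|<1+\delta$). Push forward via $\tilde P=P\circ T^{-1}\in B_{\mathcal{P}(^nE_k)}$ and $\tilde Q_i=Q_i\circ T^{-1}\in B_{\mathcal{P}_w(^nE_k)}$. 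The 3-ball property in $\mathcal{P}(^nE_k)$ produces $\tilde R\in \mathcal{P}_w(^nE_k)$ with $\|\tilde P+\tilde Q_i-\tilde R\|\le 1+\varepsilon/2$. Pulling back via $R=\tilde R\circ T$, one has $R\in \mathcal{P}_w(^nE)$, and for $x\in B_E$ the bound $\|Tx\|\le 1+\delta$ together with $n$-homogeneity gives
$$
|(P+Q_i-R)(x)|=|(\tilde P+\tilde Q_i-\tilde R)(Tx)|\le(1+\delta)^n(1+\varepsilon/2)\le 1+\varepsilon.
$$

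I expect no substantive obstacle, in line with the author's comment that the proofs are straightforward. The only care needed is the norm bookkeeping in (b): the distortion factor $(1+\delta)^n$ coming from transport through $T$ has to be absorbed into the $\varepsilon$-slack of the 3-ball property, which is achieved by choosing $\delta$ small enough relative to $\varepsilon$.
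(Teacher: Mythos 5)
Your proof is correct, and it follows exactly the route the paper has in mind when it calls these proofs "straightforward": transporting the 3-ball property through the isometric embedding $Q\mapsto Q\circ\pi$ (resp.\ through a nearly isometric isomorphism $T$), using that bounded linear maps preserve weak continuity on bounded sets in both directions. The norm bookkeeping in (b) — rescaling $T$ so that $\|T^{-1}\|=1$ and absorbing $(1+\delta)^n$ into the $\varepsilon$-slack — is handled correctly.
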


One useful tool when computing norms in Banach spaces is the
characterization of the extreme points of the duals unit balls. Let
us observe some facts about extreme points when the space considered
is a space of polynomials.

First, note the following. If $J$ is a subspace of $\mathcal{P}(^nE)$  that contains $\mathcal{P}_f(^nE)$, then, for every $x\in S_E$, the  application $e_x$ belongs to $S_{J^*}$. Indeed, it is clear that $e_x(P)=P(x)$ is a linear functional on $J$ and that $\|e_x\|=\sup\{|e_x(P)|:\, P\in B_J\}\leq 1$. Also, since $J$ contains all finite type $n$-homogeneous polynomials, it contains $\gamma^n$, for every  $\gamma\in E^*$ and so $\|e_x\|=1$.

\begin{proposition}\label{extremales}
\begin{enumerate}
\item[(a)] If $J$ is a subspace of $\mathcal{P}(^nE)$  that contains all finite type $n$-homogeneous polynomials, then
$$
Ext B_{J^*}\subset \overline{\big\{\pm e_x
:x\in S_E\big\}}^{w^*},
$$where the $\pm$ is needed only in the real case and $w^*$ is the topology $\sigma(J^*,J)$.

%
%

\item[(b)]
For the particular case $J=\mathcal{P}_w(^nE)$ we can be more precise:
$$
Ext B_{\mathcal{P}_w(^nE)^*}\subset \{\pm e_z:z\in S_{E^{**}}\big\},
$$where the $\pm$ is needed only in the real case.
\end{enumerate}
\end{proposition}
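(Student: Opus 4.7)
My plan is to derive (a) from a Hahn--Banach separation argument combined with Milman's converse to the Krein--Milman theorem, and then to reduce (b) to (a) by pushing forward via the evaluation map $z \mapsto e_z$ defined on $B_{E^{**}}$.

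For (a), set $A = \{\pm e_x : x \in S_E\}$ in the real case and $A = \{e_x : x \in S_E\}$ in the complex case. The homogeneity identity $e_{\lambda x} = \lambda^n e_x$ makes $A$ invariant under multiplication by all scalars of modulus $1$ (in the real case because $\pm$ is included explicitly; in the complex case because $\lambda \mapsto \lambda^n$ sweeps out the whole unit circle). I would then show that $B_{J^*}$ coincides with the $w^*$-closed convex hull $C$ of $A$. The inclusion $C \subset B_{J^*}$ follows from the observation preceding the proposition, which gives $e_x \in S_{J^*}$. For the reverse, suppose $\phi_0 \in B_{J^*} \setminus C$. Hahn--Banach produces $P \in J$ and $\alpha \in \mathbb{R}$ with
$$
\mathrm{Re}\,\phi(P) \leq \alpha < \mathrm{Re}\,\phi_0(P) \qquad \text{for all } \phi \in C.
$$
Because $C$ is invariant under multiplication by unit scalars, this upgrades to $|\phi(P)| \leq \alpha$ on $C$; evaluating on $A$ gives $|P(x)| \leq \alpha$ for every $x \in S_E$, hence $\|P\| \leq \alpha$, which contradicts $|\phi_0(P)| > \alpha$. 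Milman's theorem then yields $\mathrm{Ext}\, B_{J^*} \subset \overline{A}^{w^*}$.

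For (b), I apply (a) with $J = \mathcal{P}_w(^nE)$ and show that $\overline{\{\pm e_x : x \in S_E\}}^{w^*}$ sits inside $\{\pm e_z : z \in B_{E^{**}}\}$. The crucial input is that for $P \in \mathcal{P}_w(^nE)$ the Aron--Berner extension $\overline{P}$ is $w^*$-continuous on bounded subsets of $E^{**}$, a standard consequence of the fact that such $P$ are weakly uniformly continuous on bounded sets (Aron--Herv\'e--Valdivia). This makes the map $\Phi : (B_{E^{**}}, w^*) \to (J^*, w^*)$ given by $\Phi(z) = e_z$ continuous, so $\Phi(B_{E^{**}}) = \{e_z : z \in B_{E^{**}}\}$ is $w^*$-compact and, by Goldstine, contains every $e_x$ with $x \in S_E \subset B_{E^{**}}$; thus it contains the whole $w^*$-closure (and likewise with $\pm$ attached in the real case). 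To finish, any $\phi \in \mathrm{Ext}\, B_{J^*}$ has $\|\phi\|=1$, and writing $\phi = \pm e_z$ with $z \in B_{E^{**}}$ together with $\|e_z\|_{J^*} \leq \|z\|^n$ forces $\|z\|=1$, so $z \in S_{E^{**}}$.

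The main obstacle is the appeal to the $w^*$-continuity on bounded sets of $\overline{P}$ for $P \in \mathcal{P}_w(^nE)$; since this is standard I would simply cite it (e.g.\ \cite{AHV,Di}) rather than reprove it via the symmetric multilinear form. Everything else is a routine combination of Hahn--Banach, Milman's theorem, and Goldstine's theorem.
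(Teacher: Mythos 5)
Your proposal is correct and follows essentially the same route as the paper: part (a) is the identity $B_{J^*}=\overline{\Gamma\{\pm e_x:x\in S_E\}}^{w^*}$ established by Hahn--Banach separation followed by Milman's theorem, and part (b) uses the $w^*$-continuity on bounded sets of the Aron--Berner extension of polynomials in $\mathcal{P}_w(^nE)$ to identify the $w^*$-closure of $\{\pm e_x\}$ with evaluations at points of $B_{E^{**}}$. Your closing step, deducing $\|z\|=1$ from $\|\phi\|=1$ and $\|e_z\|\leq\|z\|^n$, is a welcome precision that the paper's proof leaves implicit when it asserts the $w^*$-limit lies in $S_{E^{**}}$.
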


\begin{proof}
(a) We have that
$$
B_{J^*}=\overline{\Gamma\{\pm e_x:x\in
S_E\}}^{w^*}.
$$ Indeed, one inclusion follows from the comment before the proposition and the other is easily obtained through Hahn-Banach theorem.

Now, Milman's theorem \cite[Theorem 3.41]{FHHMPZ} yields that
$$
Ext B_{J^*}\subset\overline{\{\pm e_x:x\in
S_E\}}^{w^*}.
$$

(b) When $J=\mathcal{P}_w(^nE)$, let us see that $\overline{\{\pm e_x:x\in
S_E\}}^{w^*}\subset\{\pm e_z:z\in S_{E^{**}}
\big\}$. If $\Phi\in\overline{\{\pm e_x:x\in
S_E\}}^{w^*}$, then there exists a net $\{x_\alpha\}_\alpha$ in $S_E$ such that $e_{x_\alpha}\overset{w^*}{\rightarrow}\Phi$ (or $-e_{x_\alpha}\overset{w^*}{\rightarrow}\Phi$).
In passing to appropriate subnets, we can suppose that $\{x_\alpha\}_\alpha$ is
$w^*$-convergent to an element $z$ in $S_{E^{**}}$ (here, $w^*$
means $\sigma(E^{**}, E^*)$). Since the Aron-Berner extension of a weakly continuous on bounded sets polynomial is $w^*$-continuous on bounded sets, we derive that $\overline{P}(x_\alpha)\to \overline{P}(z)$, for every $P\in \mathcal{P}_w(^nE)$. Thus, $e_{ x_\alpha}\overset{w^*}{\rightarrow}e_z$. Therefore, $\Phi=e_z$.
\end{proof}

Some observations are in order.

\begin{remark}\rm
\begin{enumerate}
\item For the particular case $J=\mathcal{P}(^nE)$, the previous result can also be proved using the representation of $\mathcal{P}(^nE)$ as the dual of $\widehat{\otimes}^n_{\pi_s}E$ (the symmetric projective $n$-tensor product of $E$), the description of the unit ball of this space given in \cite{Flo} and Goldstine's theorem.

\item When $E^*$ has the approximation property, item (b) of the proposition above can also be obtained from another argument. Indeed, in this case, we have the equality
$\mathcal{P}_w(^nE)=\mathcal{P}_A(^nE)$ and then its dual is the space of
integral $n$-homogeneous polynomials on $E^*$:
$\mathcal{P}_w(^nE)^*=\mathcal{P}_I(^nE^*)$. By the description of
the set of extreme points of the ball of the space of integral
polynomials in  \cite[Proposition 1]{BR-01} (see also \cite[Proof of
Theorem 1.5]{CD}), the result is obtained.
\end{enumerate}
\end{remark}

The essential norm of a linear operator is the distance to the
subspace of compact operators. Analogously, we define:

\begin{definition}
For an $n$-homogeneous polynomial $P\in\mathcal{P}(^nE)$, the {\bf
essential norm} of $P$ is
$$
\|P\|_{es}=d(P,\mathcal{P}_w(^nE))=\inf\{\|P-Q\|: Q\in
\mathcal{P}_w(^nE)\}.
$$
\end{definition}

The following result, which is the polynomial version of
\cite[Proposition VI.4.7]{HWW} has an important consequence stated
in the corollary below.

\begin{proposition}
For any $P\in\mathcal{P}(^nE)$, consider
$$
w(P)=\sup\big\{\limsup|P(x_\alpha)|: \|x_\alpha\|=1,
x_\alpha\overset{w}{\rightarrow} 0\big\}.
$$
If $\mathcal{P}_w(^nE)$ is an $M$-ideal in $\mathcal{P}(^nE)$, then
$\|P\|_{es}=w(P)$.
\end{proposition}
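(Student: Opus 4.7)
The plan is to prove both inequalities separately. The easy direction is $w(P)\leq \|P\|_{es}$: given any $Q\in \mathcal{P}_w(^nE)$, since $Q$ sends bounded weakly null nets to null nets, any unit net $(x_\alpha)$ with $x_\alpha\xrightarrow{w}0$ satisfies $Q(x_\alpha)\to 0$, hence
$$
\limsup|P(x_\alpha)|\leq \limsup\big(|P(x_\alpha)-Q(x_\alpha)|+|Q(x_\alpha)|\big)\leq \|P-Q\|.
$$
Taking the supremum over such nets and then the infimum over $Q$ yields $w(P)\leq \|P\|_{es}$. Note this direction does not need the $M$-ideal hypothesis.

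For the harder direction $\|P\|_{es}\leq w(P)$, I would start from the Hahn--Banach duality
$$
\|P\|_{es}=d(P,\mathcal{P}_w(^nE))=\sup\{|\Phi(P)|:\Phi\in B_{\mathcal{P}_w(^nE)^\perp}\}.
$$
The ball $B_{\mathcal{P}_w(^nE)^\perp}$ is $w^*$-compact and convex, and the function $\Phi\mapsto|\Phi(P)|$ is $w^*$-continuous and convex, so by the Bauer maximum principle the supremum is attained at an extreme point. Now I invoke the $M$-ideal hypothesis in the form recalled in the introduction: the decomposition $\mathcal{P}(^nE)^*=\mathcal{P}_w(^nE)^\perp\oplus_1 \mathcal{P}_w(^nE)^*$ forces
$$
Ext(B_{\mathcal{P}_w(^nE)^\perp})\subset Ext(B_{\mathcal{P}(^nE)^*}),
$$
so every extreme $\Phi\in B_{\mathcal{P}_w(^nE)^\perp}$ is an extreme functional of the full polynomial dual and Proposition~\ref{extremales}(a) applies: there exists a net $(x_\alpha)\subset S_E$ with $\pm e_{x_\alpha}\xrightarrow{w^*}\Phi$.

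The key step is then to upgrade this net to a \emph{weakly null} one. The observation is that $\Phi$ annihilates $\mathcal{P}_w(^nE)$ and, in particular, all finite type polynomials. Testing $\Phi$ against $\gamma^n$ for an arbitrary $\gamma\in E^*$ gives $\gamma(x_\alpha)^n\to 0$, hence $\gamma(x_\alpha)\to 0$, which is exactly $x_\alpha\xrightarrow{w}0$ in $E$. With a unit weakly null net in hand, the limit
$$
|\Phi(P)|=\lim|P(x_\alpha)|\leq w(P)
$$
finishes the estimate, and taking the supremum over extreme $\Phi$ gives $\|P\|_{es}\leq w(P)$.

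The main obstacle, as in the linear analogue, is precisely the transition from an abstract extreme functional in the annihilator to a concrete unit weakly null net that computes its value on $P$. All the ingredients for this transition have been prepared earlier in the paper: the $M$-ideal decomposition of extreme points, the description of $Ext(B_{\mathcal{P}(^nE)^*})$ from Proposition~\ref{extremales}(a), and the trivial inclusion $\mathcal{P}_f(^nE)\subset \mathcal{P}_w(^nE)$. The elegant point is that the annihilator condition, when combined with the presence of $\gamma^n$ in $\mathcal{P}_w(^nE)$, automatically kills all first-order directional information of the approximating net, thereby providing the weak nullity for free.
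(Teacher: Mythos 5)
Your proposal is correct and follows essentially the same route as the paper: the trivial estimate $w(P)\leq\|P\|_{es}$, then producing an extreme point $\Phi$ of $B_{\mathcal{P}_w(^nE)^\perp}$ norming the quotient class of $P$, identifying it via Proposition \ref{extremales}(a) as a $w^*$-limit of $\pm e_{x_\alpha}$ with $x_\alpha\in S_E$, and using $\Phi(\gamma^n)=0$ to force the net to be weakly null. The only cosmetic difference is that you deduce weak nullity directly from $\gamma(x_\alpha)^n\to 0$ rather than first passing to a $\sigma(E^{**},E^*)$-convergent subnet and showing the limit is $0$, which is a marginally cleaner phrasing of the same step.
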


\begin{proof}
Let $Q\in\mathcal{P}_w(^nE)$ and a weakly null net
$\{x_\alpha\}_\alpha$ with $\|x_\alpha\|=1$, for all $\alpha$. Then,
$$
\|P-Q\|\geq\big|(P-Q)(x_\alpha)\big|\geq \big|P(x_\alpha)\big|-
\big|Q(x_\alpha)\big|.
$$
Since $Q(x_\alpha)\to 0$ it follows that
$\|P-Q\|\geq\limsup|P(x_\alpha)|$ and thus $\|P\|_{es}\geq w(P)$.

Now suppose that $\mathcal{P}_w(^nE)$ is an $M$-ideal in
$\mathcal{P}(^nE)$. Then we have
$$
\mathcal{P}(^nE)^*=\mathcal{P}_w(^nE)^\perp \oplus_1
\mathcal{P}_w(^nE)^*\qquad\textrm{ and }\qquad
ExtB_{\mathcal{P}(^nE)^*}=ExtB_{\mathcal{P}_w(^nE)^\perp}\cup
ExtB_{\mathcal{P}_w(^nE)^*}.
$$

The essential norm of $P$, $\|P\|_{es}$, is the norm of the class of
$P$ in the quotient space $\mathcal{P}(^nE)/\mathcal{P}_w(^nE)$ and
the dual of this quotient can be isometrically identified with
$\mathcal{P}_w(^nE)^\perp$. Then, there exists $\Phi\in Ext
B_{\mathcal{P}_w(^nE)^\perp}$ such that $\Phi(P)=\|P\|_{es}$. So,
$\Phi\in Ext B_{\mathcal{P}(^nE)^*}$ and, by Proposition \ref{extremales} (a),
$\Phi\in \overline{\big\{\pm e_x:x\in S_E\big\}}^{w^*}$.

Then, there exists a net $\{x_\alpha\}_\alpha$ in $S_E$ such that
$e_{x_\alpha}\overset{w^*}{\rightarrow}\Phi$ (or $-e_{x_\alpha}\overset{w^*}{\rightarrow}\Phi$), where $w^*$ means the
topology $\sigma(\mathcal{P}(^nE)^*,\mathcal{P}(^nE))$. In passing
to appropriate subnets, we can suppose that $\{x_\alpha\}_\alpha$ is
$w^*$-convergent to an element $z$ in $S_{E^{**}}$ (here, $w^*$
means $\sigma(E^{**}, E^*)$).

If $\gamma\in E^*$, since
$\gamma^n\in\mathcal{P}_w(^nE)$ we obtain
$$ 0=\Phi(\gamma^n)=\lim_\alpha \gamma(x_\alpha)^n=
z(\gamma)^n.$$

So we have proved that $z(\gamma)=0$, for all $\gamma\in E^*$, which
implies that $z=0$ and that $\{x_\alpha\}_\alpha$ is weakly null. As
a consequence,
$$
\|P\|_{es}=\Phi(P)=\lim_\alpha P(x_\alpha)\leq w(P),
$$ which completes the proof.

\end{proof}

Since for every polynomial $P$ which is weakly continuous on bounded
sets at 0 it holds that $w(P)=0$ and the equality $\|P\|_{es}=0$
implies that $P$ is weakly continuous on bounded sets, we obtain:

\begin{corollary} \label{w-w0}
If $\mathcal{P}_w(^nE)$ is an $M$-ideal in $\mathcal{P}(^nE)$, then
$\mathcal{P}_w(^nE)=\mathcal{P}_{w0}(^nE)$.
\end{corollary}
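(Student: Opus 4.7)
The plan is to leverage the preceding proposition, which gives $\|P\|_{es}=w(P)$ under the $M$-ideal hypothesis. The corollary should then reduce to showing that the quantity $w(P)$ vanishes precisely on $\mathcal{P}_{w0}(^nE)$, so that the essential norm vanishes on that space, forcing it to lie inside the closed subspace $\mathcal{P}_w(^nE)$.

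First, I would observe that the inclusion $\mathcal{P}_w(^nE)\subseteq \mathcal{P}_{w0}(^nE)$ is trivial from the definitions (a polynomial weakly continuous on bounded sets is, in particular, weakly continuous at $0$). So only the reverse inclusion needs argument. Take $P\in\mathcal{P}_{w0}(^nE)$ and any bounded weakly null net $\{x_\alpha\}_\alpha$ with $\|x_\alpha\|=1$; by the definition of being weakly continuous on bounded sets at $0$, we have $P(x_\alpha)\to P(0)=0$ (using that $P$ is $n$-homogeneous with $n\geq 1$). Taking the supremum over all such nets yields $w(P)=0$.

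Next, apply the proposition above: since $\mathcal{P}_w(^nE)$ is an $M$-ideal in $\mathcal{P}(^nE)$, we have $\|P\|_{es}=w(P)=0$. By the definition of the essential norm, this says $d(P,\mathcal{P}_w(^nE))=0$. Since $\mathcal{P}_w(^nE)$ is a closed subspace of $\mathcal{P}(^nE)$, this forces $P\in\mathcal{P}_w(^nE)$, giving the desired inclusion $\mathcal{P}_{w0}(^nE)\subseteq\mathcal{P}_w(^nE)$.

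There is no real obstacle here: the content is all concentrated in the preceding proposition, and the corollary is essentially a two-line consequence once one notes that $w(\cdot)$ vanishes on $\mathcal{P}_{w0}(^nE)$. The only thing to be a bit careful about is the direction of the implication $w(P)=0\Rightarrow P\in\mathcal{P}_w(^nE)$, which uses both the $M$-ideal hypothesis (through the proposition) and the closedness of $\mathcal{P}_w(^nE)$.
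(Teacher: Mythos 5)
Your proposal is correct and is exactly the paper's argument: the paper derives the corollary from the preceding proposition by noting that $w(P)=0$ for every $P\in\mathcal{P}_{w0}(^nE)$ and that $\|P\|_{es}=0$ forces $P\in\mathcal{P}_w(^nE)$ by closedness. Nothing is missing.
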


This corollary tells us that we have at most one value of $n$ (which
we call ``critical degree'') where $\mathcal{P}_w(^nE)$ could be a
non trivial $M$-ideal in $\mathcal{P}(^nE)$. Indeed, recall the
following simple facts, whose proofs appeared in (or can be derived
from) the articles \cite{BR-98,AD}:
\begin{itemize}
\item If an $n$-homogeneous polynomial $P$ is weakly continuous on
bounded sets at any point $x\in E$, then it is weakly continuous on
bounded sets at 0.
\item If $\mathcal{P}_w(^kE)=\mathcal{P}(^kE)$, for all $1\leq k<n$, then
$\mathcal{P}_w(^nE)=\mathcal{P}_{w0}(^nE)$.
\item If there exists an $n$-homogeneous polynomial $P$ which is not
weakly continuous on bounded sets at a point $x\in E$, $x\not=0$,
then, being $\gamma\in E^*$, such that $\gamma(x)\not=0$, the
$(n+k)$-homogenous polynomial $Q=\gamma^kP$ belongs to
$\mathcal{P}_{w0}(^{n+k}E)\setminus\mathcal{P}_{w}(^{n+k}E)$.
\end{itemize}
So, the situation can be summarized as follows:

\begin{remark}\label{n-unico}\rm
For a Banach space $E$, either $\mathcal{P}_w(^kE)=\mathcal{P}_{w0}(^kE)=\mathcal{P}(^kE)$,
for all $k$, or there exists $n\in\mathbb{N}$ such that:
\begin{itemize}
\item $\mathcal{P}_w(^kE)=\mathcal{P}_{w0}(^kE)=\mathcal{P}(^kE)$,
for all $k< n$.
\item $\mathcal{P}_w(^nE)=\mathcal{P}_{w0}(^nE)\subsetneqq
\mathcal{P}(^nE)$.
\item $\mathcal{P}_w(^kE)\subsetneqq \mathcal{P}_{w0}(^kE)\subset \mathcal{P}(^kE)$,
for all $k> n$.
\end{itemize}
When this value of $n$ does exist, we call it the critical degree of $E$ and denote $n=cd(E)$.
\end{remark}

Therefore, if there exists a polynomial on $E$ which is not weakly
continuous on bounded sets, the critical degree is the minimum of
all $k$ such that $\mathcal{P}_{w}(^kE)\not=\mathcal{P}(^kE)$.


\begin{remark}\label{refle}\rm
If $E$ is a reflexive Banach space with the approximation property,
then $\mathcal{P}_{w}(^kE)^{**}=\mathcal{P}(^kE)$, for every $k$.
When this is the case, the problem that we are studying here is
whether $\mathcal{P}_{w}(^nE)$ is an $M$-ideal in its bidual.
\end{remark}

\bigskip
We finish this section by a polynomial version of \cite[Proposition
VI.4.8]{HWW}. This result can be related with the extensions of the
Bishop-Phelps theorem to the polynomial setting. There is no
polynomial (nor multilinear) Bishop-Phelps theorem \cite{AAP, Ac},
but there are some variations that are valid. Aron, Garc\'{\i}a and
Maestre \cite{AGM} proved that the set of 2-homogeneous polynomials
whose Aron-Berner extension attain their norm is dense in the set of
2-homogeneous polynomials. It is an open problem if this result can
be generalized for $n$-homogeneous polynomials. Here, with the very
strong hypothesis of $\mathcal{P}_w(^nE)$ being an $M$-ideal in
$\mathcal{P}(^nE)$, we obtain a stronger conclusion.

\begin{proposition}
Let $E$ be a Banach space and suppose that $\mathcal{P}_w(^nE)$ is an $M$-ideal in
$\mathcal{P}(^nE)$.
\begin{enumerate}
\item[(a)] If $P\in \mathcal{P}(^nE)$ is such that its Aron-Berner extension $\overline{P}$
does not attain its norm at $B_{E^{**}}$, then $\|P\|=\|P\|_{es}$.
\item[(b)] The set of polynomials in $\mathcal{P}(^nE)$ whose
Aron-Berner extension do not attain the norm is nowhere dense in
$\mathcal{P}(^nE)$.
\end{enumerate}
\end{proposition}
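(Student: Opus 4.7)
For part (a), my plan is to realise $\|P\|$ via a carefully chosen extreme functional and then split cases using the $M$-ideal decomposition. By Hahn--Banach there is $\Psi_0\in S_{\mathcal{P}(^nE)^*}$ with $|\Psi_0(P)|=\|P\|$; multiplying by a suitable unit scalar I may assume $\Psi_0(P)=\|P\|$. The face
$$
F=\{\Phi\in B_{\mathcal{P}(^nE)^*}\,:\,\Phi(P)=\|P\|\}
$$
is nonempty, convex and $w^*$-compact, so by Krein--Milman it has an extreme point $\Psi$, which, being extreme in a face, is also extreme in $B_{\mathcal{P}(^nE)^*}$. The $M$-ideal identity
$$
Ext\, B_{\mathcal{P}(^nE)^*}=Ext\, B_{\mathcal{P}_w(^nE)^\perp}\cup Ext\, B_{\mathcal{P}_w(^nE)^*}
$$
recalled in the introduction then puts $\Psi$ in one of the two summands. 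If $\Psi\in Ext\, B_{\mathcal{P}_w(^nE)^*}$, Proposition~\ref{extremales}(b) gives $\Psi=\pm e_z$ for some $z\in S_{E^{**}}$, whence $|\overline{P}(z)|=\|P\|$, contradicting the hypothesis. Hence $\Psi\in\mathcal{P}_w(^nE)^\perp$, so for every $Q\in\mathcal{P}_w(^nE)$,
$$
\|P\|=\Psi(P-Q)\leq \|P-Q\|,
$$
and taking the infimum yields $\|P\|\leq\|P\|_{es}$; the reverse inequality is trivial.

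For part (b), statement (a) says that the set $N$ of polynomials whose Aron--Berner extension fails to attain its norm is contained in $A=\{P\,:\,\|P\|=\|P\|_{es}\}$. The set $A$ is closed, since both $\|\cdot\|$ and $\|\cdot\|_{es}$ are $1$-Lipschitz on $\mathcal{P}(^nE)$. To conclude that $N$ is nowhere dense it therefore suffices to show that $A^c$ is dense. Given $P\in\mathcal{P}(^nE)$ and $\delta>0$, I would choose $x_0\in S_E$ with $|P(x_0)|>\|P\|-\delta/4$, a unimodular scalar $\theta$ with $\theta P(x_0)=|P(x_0)|$, and, via Hahn--Banach, $\gamma\in S_{E^*}$ with $\gamma(x_0)=1$. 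Setting $P'=P+\delta\,\bar\theta\,\gamma^n$, one has $\|P'-P\|=\delta$, and since $\gamma^n\in\mathcal{P}_w(^nE)$ the essential norm is unchanged, $\|P'\|_{es}=\|P\|_{es}$. A short computation gives $|P'(x_0)|=|P(x_0)|+\delta$, whence
$$
\|P'\|\geq |P'(x_0)|>\|P\|+\tfrac{3\delta}{4}\geq\|P\|_{es}+\tfrac{3\delta}{4}=\|P'\|_{es}+\tfrac{3\delta}{4},
$$
so $P'\in A^c$.

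I expect the only delicate point to be the extreme-point argument in (a): one must pass through the face $F$ and invoke Krein--Milman in order to get a functional that is extreme in the whole ball (a bare application of Hahn--Banach is not enough), so that Proposition~\ref{extremales}(b) can be used to rule out the ``weak'' case. Once (a) is established, the perturbation in (b) is painless; the unimodular scalar $\theta$ is included so that the same argument works uniformly in the real and complex cases.
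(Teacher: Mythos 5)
Your proof is correct. Part (a) is essentially the paper's argument: the paper simply asserts the existence of an extreme point $\Phi\in Ext\,B_{\mathcal{P}(^nE)^*}$ with $\Phi(P)=\|P\|$ (which is exactly the face-plus-Krein--Milman step you spell out), then splits via $Ext\,B_{\mathcal{P}(^nE)^*}=Ext\,B_{\mathcal{P}_w(^nE)^\perp}\cup Ext\,B_{\mathcal{P}_w(^nE)^*}$ and rules out the second case with Proposition \ref{extremales}(b); your final step $\|P\|=\Psi(P-Q)\leq\|P-Q\|$ is a slightly more direct way of getting $\|P\|\leq\|P\|_{es}$ than the paper's identification of $\big(\mathcal{P}(^nE)/\mathcal{P}_w(^nE)\big)^*$ with $\mathcal{P}_w(^nE)^\perp$. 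Part (b) is where you genuinely diverge: the paper appeals to the general theory of metric complements of proper $M$-ideals (\cite[Proposition II.1.11 and Corollary II.1.7]{HWW}), reducing the empty-interior claim to the identity $\sup_{\Phi\in B_{\mathcal{P}_w(^nE)^*}}|\langle\Phi,P\rangle|=\|P\|$, which it verifies through the evaluations $e_x$. You instead show directly that the complement of $\{P:\|P\|=\|P\|_{es}\}$ is dense by perturbing $P$ with $\delta\,\bar\theta\,\gamma^n$ at a near-norming point; this is elementary, self-contained, and exploits only that $\mathcal{P}_w(^nE)$ contains the finite-type polynomials (so the perturbation raises $\|P\|$ without moving $\|P\|_{es}$). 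What the paper's route buys is uniformity --- it is the standard machinery that works for any proper $M$-ideal whose unit ball norms the ambient space --- while your route buys transparency and avoids the $M$-ideal identification of $\mathcal{P}_w(^nE)^*$ inside $\mathcal{P}(^nE)^*$ altogether in part (b). Both arguments are sound, including your handling of the real/complex cases via the unimodular scalar $\theta$.
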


\begin{proof}
(a) Let $\Phi\in Ext B_{\mathcal{P}(^nE)^*}$ such that
$\|P\|=\Phi(P)$. Being $\mathcal{P}_w(^nE)$ an $M$-ideal in
$\mathcal{P}(^nE)$, it holds that $\Phi\in Ext
B_{\mathcal{P}_w(^nE)^*}$ or $\Phi\in Ext
B_{\mathcal{P}_w(^nE)^\perp}$. If $\Phi\in Ext
B_{\mathcal{P}_w(^nE)^*}$ then, by Proposition \ref{extremales} (b),
$\Phi=\pm e_z$, with $z\in E^{**}$, $\|z\|=1$. Then,
$$
\|\overline{P}\|=\|P\|=\Phi(P)=|\overline{P}(z)|,
$$
which is a contradiction. Thus, $\Phi\in Ext
B_{\mathcal{P}_w(^nE)^\perp}$. Consequently,
$$
\|P\|=\Phi(P)=\sup\big\{|\Psi(P)|: \Psi\in Ext
B_{\mathcal{P}_w(^nE)^\perp}\big\}=\|P\|_{es}.
$$

(b) By (a), the set of polynomials in $\mathcal{P}(^nE)$ whose
Aron-Berner extension do not attain the norm is contained in the
metric complement
$$
\mathcal{P}_w(^nE)^{\Theta}=\big\{P\in \mathcal{P}(^nE):
\|P\|=\|P\|_{es}\big\}.
$$
Since this set is closed, we have to prove that it has empty
interior. By \cite[Proposition II.1.11 and Corollary II.1.7]{HWW},
$\mathcal{P}_w(^nE)^{\Theta}$ has empty interior if and only if
$$
\inf\left\{\sup_{\Phi\in  B_{\mathcal{P}_w(^nE)^*}}|\langle\Phi,
P\rangle|:\ \|P\|_{es}=1\right\}=1.
$$
But we have
\begin{eqnarray*}
\sup_{\Phi\in  B_{\mathcal{P}_w(^nE)^*}}|\langle\Phi, P\rangle|
&\leq & \sup_{\Phi\in  B_{\mathcal{P}(^nE)^*}}|\langle\Phi,
P\rangle| = \|P\| = \sup_{x\in B_E}|P(x)|\\
& =& \sup_{x\in B_E} |\langle e_x,P\rangle| \leq \sup_{\Phi\in
B_{\mathcal{P}_w(^nE)^*}}|\langle\Phi, P\rangle|.
\end{eqnarray*}
Thus,
$$
\inf\left\{\sup_{\Phi\in  B_{\mathcal{P}_w(^nE)^*}}|\langle\Phi,
P\rangle|:\ \|P\|_{es}=1\right\}= \inf\Big\{ \|P\|:
\|P\|_{es}=1\Big\}=1,
$$ since $\|P\|_{es}\leq \|P\|$, for all $P$, and for $P\in
\mathcal{P}_w(^nE)^{\Theta}$, they coincide.
\end{proof}

\section{Compact approximations}

Several criteria for $\mathcal{K}(X,Y)$ to be an $M$-ideal in
$\mathcal{L}(X,Y)$ were related to the so-called ``shrinking compact
approximations of the identity'' satisfying certain properties. In
order to obtain examples of Banach spaces $E$ such that
$\mathcal{P}_w(^nE)$ is an $M$-ideal in $\mathcal{P}(^nE)$, we
present here a sufficient condition for this to happen, also
involving nets of compact operators. Due to Corollary \ref{w-w0} and Remark \ref{n-unico}, we
have to add the hypothesis of $n=cd(E)$. For proving the result
we need the following simple lemma.

\begin{lemma}\label{Pw}
Let $E$ be a Banach space and suppose that there exists a bounded
net $\{S_\alpha\}_\alpha$ of linear operators from $E$ to $E$
satisfying $S_\alpha^* \gamma \to \gamma$, for all $\gamma\in E^*$.
Then, for all $P\in\mathcal{P}_w(^nE)$, we have that $\|P-P\circ
S_\alpha\|\to 0$.
\end{lemma}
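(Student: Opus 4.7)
The plan is to argue by contradiction, exploiting $w^*$-compactness of $B_{E^{**}}$ together with the fact (already invoked in the proof of Proposition~\ref{extremales}(b)) that the Aron-Berner extension of a polynomial in $\mathcal{P}_w(^nE)$ is $w^*$-continuous on bounded subsets of $E^{**}$. This avoids having to pass through the (equivalent) characterization of $\mathcal{P}_w(^nE)$ as polynomials weakly uniformly continuous on bounded sets.

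Suppose, towards a contradiction, that $\|P-P\circ S_\alpha\|\not\to 0$. Then, passing to a subnet, one may choose $\varepsilon>0$ and $x_\alpha\in B_E$ with $|P(x_\alpha)-P(S_\alpha x_\alpha)|\geq\varepsilon$ for every $\alpha$. By Banach--Alaoglu, a further subnet may be extracted so that $x_\alpha\xrightarrow{w^*}z$ in $E^{**}$ for some $z\in B_{E^{**}}$. The key technical step is to verify that $S_\alpha x_\alpha\xrightarrow{w^*}z$ as well: for each $\gamma\in E^*$ one decomposes
$$
\gamma(S_\alpha x_\alpha)-\langle z,\gamma\rangle=\langle x_\alpha,\,S_\alpha^*\gamma-\gamma\rangle+\bigl(\langle x_\alpha,\gamma\rangle-\langle z,\gamma\rangle\bigr),
$$
and the first summand is dominated by $\|S_\alpha^*\gamma-\gamma\|\to 0$ (since $\|x_\alpha\|\leq 1$), while the second tends to $0$ by the $w^*$-convergence $x_\alpha\to z$.

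Both bounded nets $(x_\alpha)$ and $(S_\alpha x_\alpha)$ sit inside $C B_{E^{**}}$, where $C=\sup_\alpha\|S_\alpha\|$, and both converge $w^*$ to the same $z$. Applying $w^*$-continuity of $\overline{P}$ on this bounded set yields $P(x_\alpha)=\overline{P}(x_\alpha)\to\overline{P}(z)$ and $P(S_\alpha x_\alpha)=\overline{P}(S_\alpha x_\alpha)\to\overline{P}(z)$, contradicting the assumption $|P(x_\alpha)-P(S_\alpha x_\alpha)|\geq\varepsilon$. The main potential obstacle is the diagonal coupling between the witness $x_\alpha$ and the operator index $\alpha$, and this is precisely what the contradiction-plus-compactness format handles: the $w^*$-limit of $S_\alpha x_\alpha$ is controlled via pointwise data on $E^*$ about $S_\alpha^*$, and the rest is then automatic from $P\in\mathcal{P}_w(^nE)$.
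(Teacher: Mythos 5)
Your argument is correct, but it is genuinely different from the one in the paper. The paper's proof is quantitative and stays inside $E$: it first observes that, the net $\{S_\alpha^*\}_\alpha$ being bounded, the convergence $S_\alpha^*\gamma\to\gamma$ is uniform on relatively compact subsets of $E^*$, hence $\|K-K\circ S_\alpha\|\to 0$ for every compact operator $K$; it then invokes the Aron--Herv\'es--Valdivia characterization of $\mathcal{P}_w(^nE)$ via compactness of the associated operator $T_P\in\mathcal{K}(E,\mathcal{L}_s(^{n-1}E))$ and expands $P(x)-P(S_\alpha x)$ by the binomial formula to obtain an explicit estimate $\|P-P\circ S_\alpha\|\leq C_n\,\|T_P-T_P\circ S_\alpha\|$. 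Your proof instead is a soft compactness argument: contradiction, Banach--Alaoglu, the correct verification that $S_\alpha x_\alpha\overset{w^*}{\rightarrow}z$ via the decomposition $\gamma(S_\alpha x_\alpha)-\langle z,\gamma\rangle=\langle x_\alpha,S_\alpha^*\gamma-\gamma\rangle+(\gamma(x_\alpha)-\langle z,\gamma\rangle)$, and finally the $w^*$-continuity on bounded sets of $\overline{P}$ for $P\in\mathcal{P}_w(^nE)$ --- a fact the paper itself uses without proof in Proposition \ref{extremales}(b), so it is legitimate to cite here. Both routes ultimately rest on the same Aron--Herv\'es--Valdivia circle of ideas (weak continuity on bounded sets entails weak \emph{uniform} continuity on bounded sets), but the paper's version yields an explicit rate in terms of $\|T_P-T_P\circ S_\alpha\|$, whereas yours is shorter and non-quantitative, outsourcing the uniformity to the known behaviour of the Aron--Berner extension. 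The only cosmetic point is the extraction of the witnesses: from $\|P-P\circ S_\alpha\|\not\to 0$ one gets a subnet with $\|P-P\circ S_\alpha\|>\varepsilon$ and should then pick $x_\alpha\in B_E$ with $|P(x_\alpha)-P(S_\alpha x_\alpha)|>\varepsilon/2$, say, since the supremum need not be attained; this does not affect the argument.
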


\begin{proof}
Being $\{S_\alpha^*\}_\alpha$ a bounded net it follows that
$S_\alpha^* \gamma \to \gamma$, uniformly for $\gamma$ in a
relatively compact set. Thus, for every Banach space $F$ and every
compact operator $K\in\mathcal{K}(E,F)$, it is clear that
$$
\|K-K\circ S_\alpha\|\to 0.
$$
It is known \cite{AHV} that for $P\in\mathcal{P}_w(^nE)$, its
associated linear operator
 $T_P$ belongs to $\mathcal{K}(E,\mathcal{L}_s(^{n-1}E))$ (where $\mathcal{L}_s(^{n-1}E)$ denote
 the space of symmetric $(n-1)$-linear forms on $E$). If $C$ is a bound for $\|S_\alpha\|$, we obtain
that
\begin{eqnarray*}
|P(x)-P\circ S_\alpha(x)| &=& \Big|\sum_{j=1}^n \binom{n}{j}
P^{\vee} \Big((x-S_\alpha(x))^j,S_\alpha(x)^{n-j}\Big)\Big|\\
&\leq & \sum_{j=1}^n \binom{n}{j} \Big|T_P(x-S_\alpha(x)) \Big((x-S_\alpha(x))^{j-1},S_\alpha(x)^{n-j}\Big)\Big|\\
&\leq & \sum_{j=1}^n \binom{n}{j} \|T_P-T_P\circ S_\alpha\| \|I -
S_\alpha\|^{j-1} \|S_\alpha\|^{n-j} \|x\|^n\\
&\leq & \sum_{j=1}^n \binom{n}{j} \|T_P-T_P\circ S_\alpha\|
(1+C)^{j-1} C^{n-j} \|x\|^n.
\end{eqnarray*}

Consequently, $\|P-P\circ S_\alpha\|\to 0$.

\end{proof}

\begin{proposition}\label{condition}
Let $E$ be a Banach space and suppose that there exists a bounded
net $\{K_\alpha\}_\alpha$ of compact operators from $E$ to $E$
satisfying the following two conditions:
\begin{itemize}
\item $K_\alpha^* \gamma \to \gamma$, for all $\gamma\in E^*$.
\item For all $\varepsilon >0$ and all $\alpha_0$ there exists $\alpha >\alpha_0$ such that for every $x\in E$,
$$
\|K_\alpha x\|^n + \|x-K_\alpha x\|^n \leq (1+\varepsilon)
\|x\|^n.
$$
\end{itemize}
Suppose also that $n=cd(E)$, then
$\mathcal{P}_w(^nE)$ is an $M$-ideal in $\mathcal{P}(^nE)$.
\end{proposition}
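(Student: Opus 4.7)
The plan is to verify the $3$-ball property recalled in the introduction. Fix $P\in B_{\mathcal{P}(^nE)}$, $P_1,P_2,P_3\in B_{\mathcal{P}_w(^nE)}$ and $\varepsilon>0$; I must exhibit a \emph{single} $Q\in\mathcal{P}_w(^nE)$ satisfying $\|P+P_i-Q\|\leq 1+\varepsilon$ simultaneously for $i=1,2,3$. The guiding idea is to isolate the ``bad'' piece of $P$ --- the part which fails to be weakly continuous --- and to cancel it by means of $Q$.

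First, expanding $P$ along the decomposition $x=K_\alpha x+(x-K_\alpha x)$ yields
$$P(x)=\sum_{k=0}^{n}\binom{n}{k}\overset{\vee}{P}\bigl((K_\alpha x)^{k},(x-K_\alpha x)^{n-k}\bigr)=P\circ(I-K_\alpha)(x)+\sum_{k=1}^{n}\binom{n}{k}R_k(x),$$
where $R_k(x)=\overset{\vee}{P}\bigl((K_\alpha x)^{k},(x-K_\alpha x)^{n-k}\bigr)$. I then propose $Q=P-P\circ(I-K_\alpha)=\sum_{k=1}^{n}\binom{n}{k}R_k$. Each $R_k$ with $k\geq 1$ contains at least one factor $K_\alpha x$; since $K_\alpha$ is compact, $K_\alpha x_\beta\to 0$ in norm whenever $x_\beta\to 0$ weakly along a bounded net, and hence $R_k(x_\beta)\to 0$. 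Therefore $Q\in\mathcal{P}_{w0}(^nE)$, which coincides with $\mathcal{P}_w(^nE)$ by the assumption $n=cd(E)$ and Remark \ref{n-unico}.

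Next, I would pick $\alpha$ large enough so that Lemma \ref{Pw} supplies $\|P_i-P_i\circ K_\alpha\|<\varepsilon/2$ for $i=1,2,3$ and the second hypothesis supplies $\|K_\alpha x\|^{n}+\|x-K_\alpha x\|^{n}\leq(1+\varepsilon/2)\|x\|^{n}$ for every $x\in E$. Then $P+P_i-Q=P\circ(I-K_\alpha)+P_i$, and for $x\in B_E$ I would estimate
$$|(P+P_i-Q)(x)|\leq\|(I-K_\alpha)x\|^{n}+|P_i(K_\alpha x)|+|P_i(x)-P_i(K_\alpha x)|\leq\|(I-K_\alpha)x\|^{n}+\|K_\alpha x\|^{n}+\tfrac{\varepsilon}{2}\leq 1+\varepsilon,$$
using $\|P\|\leq 1$ and $\|P_i\|\leq 1$. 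This establishes the $3$-ball property, proving that $\mathcal{P}_w(^nE)$ is an $M$-ideal.

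The main obstacle I expect is producing a \emph{single} $Q$ that services all three $P_i$ at once: naive candidates such as $P\circ K_\alpha$ or the $i$-dependent $P\circ K_\alpha+P_i$ leave either an uncontrollable residue $P-P\circ K_\alpha$ or fail the single-witness requirement. The key insight is that the only obstruction to weak continuity of $P$ lives in the pure $(I-K_\alpha)$-term of the multinomial expansion; discarding precisely that term leaves a legitimate element of $\mathcal{P}_w(^nE)$ whose residual $P\circ(I-K_\alpha)$ pairs harmoniously with $P_i\circ K_\alpha$ under the $\ell^{n}$-type inequality guaranteed by the second hypothesis.
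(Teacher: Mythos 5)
Your proof is correct and follows essentially the same route as the paper: you take the same correcting polynomial $Q(x)=P(x)-P(x-K_\alpha x)$, verify $Q\in\mathcal{P}_{w0}(^nE)=\mathcal{P}_w(^nE)$ via the multinomial expansion and compactness of $K_\alpha$, and obtain the $3$-ball estimate from Lemma \ref{Pw} together with the $\ell^n$-type inequality. The only difference is notational (your $P,Q$ are the paper's $Q,P$) and that you organize the final bound pointwise rather than at the level of operator norms.
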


\begin{proof}
 Let $P_1, P_2, P_3\in B_{\mathcal{P}_w(^nE)}$, $Q\in
B_{\mathcal{P}(^nE)}$ and $\varepsilon >0$. In order to verify the
3-ball property \cite[Theorem I.2.2]{HWW}, we have to show that
there exists a polynomial $P\in \mathcal{P}_w(^nE)$ such that
\begin{equation}\label{3-ball}
\|Q+P_i-P\|\leq 1+ \varepsilon,\qquad \textrm{ for }i=1,2,3.
\end{equation}
Since, by the previous lemma, $\|P_i-P_i\circ K_\alpha\|\to 0$ for
$i=1,2,3$, let us fixed a value of $\alpha$ such that
$$
\|P_i-P_i\circ K_\alpha\|\leq \frac{\varepsilon}{2},\qquad \textrm{
for } i=1,2,3
$$
and also
$$
\|K_\alpha x\|^n + \|x-K_\alpha x\|^n \leq
\Big(1+\frac{\varepsilon}{2}\Big) \|x\|^n.
$$
Consider the polynomial $P\in \mathcal{P}(^nE)$ given by
$$
P(x)=Q(x)-Q(x-K_\alpha x).
$$
We have to prove that $P$ is weakly continuous on bounded sets and
that satisfies inequality (\ref{3-ball}).

Let $\{x_\beta\}_\beta$ be a bounded weakly null net. If we show
that $P(x_\beta)\to 0$, then $P$ should be weakly continuous on
bounded sets (since $\mathcal{P}_w(^nE)=\mathcal{P}_{w0}(^nE)$). By
the compacity of $K_\alpha$, we have that $K_\alpha x_\beta \to 0$,
as $\beta\to\infty$, and so
\begin{eqnarray*}
|P(x_\beta)|&=& \big|Q(x_\beta)-Q(x_\beta-K_\alpha x_\beta)\big|
\leq \sum_{j=1}^n \binom{n}{j} \left|Q^\vee \Big( (K_\alpha
x_\beta)^j,
(x_\beta - K_\alpha x_\beta)^{n-j}\Big)\right|\\
&\leq & \sum_{j=1}^n \binom{n}{j} \|Q^\vee\| \big\|K_\alpha
x_\beta\big\|^j \big\|x_\beta - K_\alpha x_\beta\big\|^{n-j}\leq
\sum_{j=1}^n \binom{n}{j} \|Q^\vee\| \big\|K_\alpha x_\beta\big\|^j
\big( (1+C_1)C_2\big)^{n-j}\underset{\beta}{\longrightarrow} 0,
\end{eqnarray*}
where $C_1$ and $C_2$ are bounds for the nets $\{K_\alpha\}_\alpha$
and $\{x_\beta\}_\beta$, respectively.

Equation (\ref{3-ball}) yields from the inequalities
$$
\|Q+P_i-P\|\leq \|Q+P_i\circ K_\alpha-P\| + \|P_i-P_i\circ
K_\alpha\|\leq \|Q+P_i\circ K_\alpha-P\| + \frac{\varepsilon}{2},
$$
and
\begin{eqnarray*}
\|Q+P_i\circ K_\alpha-P\| &=& \sup_{x\in B_E} \big|Q(x-K_\alpha x) +
P_i(K_\alpha x)\big| \leq \sup_{x\in B_E} \|K_\alpha x\|^n +
\|x-K_\alpha x\|^n\\
&\leq & \sup_{x\in B_E} \Big(1+\frac{\varepsilon}{2}\Big) \|x\|^n
= 1+\frac{\varepsilon}{2}.
\end{eqnarray*}
\end{proof}

\begin{remark}\rm
In \cite{OW}, Oja and Werner introduced the concept of $(M_p)$-space
as a space $X$ such that $\mathcal{K}(X\oplus_p X)$ is an $M$-ideal
in $\mathcal{L}(X\oplus_p X)$. By \cite[Theorem VI.5.3]{HWW}, for
$p\leq n$, every $(M_p)$-space fulfils the conditions (about the
net of compact operators) of Proposition \ref{condition}.
\end{remark}

For spaces with shrinking finite dimensional decomposition we have
the following simpler version of Proposition \ref{condition}.

\begin{corollary}\label{fdd}
Let $E$ be a Banach space with a shrinking finite dimensional
decomposition with associate projections $\{\pi_m\}_m$ such that:
\begin{itemize}
\item For all $\varepsilon >0$ and all $m_0\in\mathbb{N}$ there
exists $m>m_0$ such that for every $x\in E$,
$$
\|\pi_m x\|^n + \|x-\pi_m x\|^n \leq (1+\varepsilon) \|x\|^n.
$$
\end{itemize}
Suppose also that $n=cd(E)$. Then,
$\mathcal{P}_w(^nE)$ is an $M$-ideal in $\mathcal{P}(^nE)$.
\end{corollary}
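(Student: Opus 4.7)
The plan is to deduce this corollary directly from Proposition \ref{condition} by taking the net of compact operators to be the sequence $\{\pi_m\}_m$ of projections associated to the finite dimensional decomposition. So the task reduces to verifying that the hypotheses of Proposition \ref{condition} hold for $K_m = \pi_m$.

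First I would note that each $\pi_m$ has finite dimensional range, hence is compact, and that $\{\pi_m\}_m$ is uniformly bounded (this is a standard consequence of the uniform boundedness principle built into the definition of an FDD). The second bullet of Proposition \ref{condition} is exactly the hypothesis imposed on $\{\pi_m\}_m$ in the statement of the corollary, so nothing needs to be done there. The crucial point to verify is the first bullet of Proposition \ref{condition}, namely that $\pi_m^*\gamma \to \gamma$ in norm for every $\gamma \in E^*$. This is precisely the characterization of a \emph{shrinking} FDD: the FDD is shrinking if and only if the adjoint projections $\{\pi_m^*\}_m$ converge strongly to the identity on $E^*$, equivalently, the $\pi_m^*$'s form a (shrinking) FDD of $E^*$. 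I would simply invoke this standard fact.

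Once these three items are checked, together with the assumption $n = cd(E)$ which is carried over unchanged, Proposition \ref{condition} applies directly and yields that $\mathcal{P}_w(^nE)$ is an $M$-ideal in $\mathcal{P}(^nE)$. No additional argument is needed.

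There is essentially no obstacle here: the corollary is a straightforward translation of Proposition \ref{condition} into the more concrete language of shrinking finite dimensional decompositions. The only thing one has to know is the equivalence between shrinking FDD and strong convergence of the adjoint projections to the identity on $E^*$, which is a classical fact from the theory of Schauder decompositions.
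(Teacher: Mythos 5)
Your proof is correct and is exactly the argument the paper intends: the corollary is stated as an immediate consequence of Proposition \ref{condition}, obtained by taking $K_\alpha=\pi_m$, using finite rank for compactness, uniform boundedness of the FDD projections, and the shrinking property to get $\pi_m^*\gamma\to\gamma$.
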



From Proposition \ref{condition}, Corollary \ref{fdd} and Remark
\ref{n-unico}, we can derive the following examples.

\begin{example}\rm
If $H$ is a Hilbert space, then
$\mathcal{P}(^2H)\not=\mathcal{P}_{w}(^2H)$, because if
$\{e_\alpha\}_\alpha$ is an orthonormal basis then the polynomial
$$
P(x)=\sum_{\alpha}\langle x,e_\alpha\rangle^2
$$
is not weakly continuous on bounded sets. So the critical degree is
$n=2$ and since it is clear that the hypothesis of Proposition
\ref{condition} are valid, we get that $\mathcal{P}_w(^2H)$ is an
$M$-ideal in $\mathcal{P}(^2H)$.
\end{example}

\begin{example} \rm
Let $E=\bigoplus_{\ell_p}X_m$, where each $X_m$ is a finite
dimensional space and $1<p<\infty$. Then
$\mathcal{P}(^kE)=\mathcal{P}_{w}(^kE)$ if and only if $k<p$. This
means that the critical degree is the number $n$ that verifies
$p\leq n<p+1$. It is obvious that $E$ satisfies the hypothesis of
Corollary \ref{fdd}, and thus $\mathcal{P}_w(^nE)$ is an $M$-ideal
in $\mathcal{P}(^nE)$.

\noindent In particular, we have the result for $\ell_p$ spaces:
$\mathcal{P}_w(^n\ell_p)$ is an $M$-ideal in
$\mathcal{P}(^n\ell_p)$, for $p\leq n<p+1$.
\end{example}

\begin{example} \rm
Let us consider a dual of a Lorentz sequence space $E=d^*(w,p)$,
with $1<p<\infty$. Our result would work for certain sequences $w$.
If $n-1$ is the greatest integer strictly smaller than $p^*$,
suppose that $w\not\in\ell_s$, where
$s=\Big(\frac{(n-1)^*}{p}\Big)^*$. Then, by \cite[Proposition
2.4]{JP}, $n=cd(E)$. We obtain that $\mathcal{P}_w(^nE)$ is an $M$-ideal in
$\mathcal{P}(^nE)$. Indeed, $d^*(w,p)$ has a shrinking Schauder
basis $\{e_j\}_j$ and if $\pi_m$ is the projection
$\pi_m(x)=\sum_{j=1}^m x_je_j$, it holds that
$$
\|\pi_m x\|^n + \|x-\pi_m x\|^n \leq \Big(\|\pi_m x\|^{p^*} +
\|x-\pi_m x\|^{p^*}\Big)^{\frac{n}{p^*}} \leq  \|x\|^n.
$$
The last inequality follows by duality, since if $y$ and $z$ are
disjointly supported vectors  in $d(w,p)$, it holds that
$$
\|y+z\|\leq \Big(\|x\|^p+\|y\|^p\Big)^{\frac{1}{p}}.
$$
Observe that, in particular, we have proved that for $p\geq 2$,
$\mathcal{P}_w(^2d^*(w,p))$ is an $M$-ideal in
$\mathcal{P}(^2d^*(w,p))$, for any sequence $w$ (the above condition
in this case is $w\not\in\ell_1$, which is implied by the definition
of $d^*(w,p)$).
\end{example}

\begin{example}\label{Lp}\rm
Let $1<p<2$ and consider the space $L_p[0,1]$. Since $L_p[0,1]$
contains a complemented subspace isomorphic to $\ell_2$, it follows that
$\mathcal{P}(^2L_p[0,1])\not=\mathcal{P}_{w}(^2L_p[0,1])$ and $n=2$
is the critical degree. Even though we will see in Example
\ref{Lp-no-es} that $\mathcal{P}_{w}(^2L_p[0,1])$ is not an
$M$-ideal in $\mathcal{P}(^2L_p[0,1])$, the space $L_p[0,1]$ can be
renormed to a Banach space $E$ such that $\mathcal{P}_{w}(^2E)$ is
an $M$-ideal in $\mathcal{P}(^2E)$. Indeed, the renorming considered
in \cite[Proposition 6.8]{HWW} verifies all the conditions of
Corollary \ref{fdd}.
\end{example}

\begin{remark}\rm
The spaces $E$ of the previous examples are all reflexive with the
approximation property. So, by Remark \ref{refle}, the corresponding
spaces $\mathcal{P}_{w}(^nE)$ are $M$-embedded.
\end{remark}

\section{Polynomial property $(M)$}

\begin{lemma}
If $\mathcal{P}_w(^nE)$ is an $M$-ideal in $\mathcal{P}(^nE)$ then,
for each $P\in \mathcal{P}(^nE)$ there exists a bounded net
$\{P_\alpha\}_\alpha\subset \mathcal{P}_w(^nE)$ such that
$\overline{P}_\alpha(z)\to \overline{P}(z)$, for all $z\in E^{**}$.
\end{lemma}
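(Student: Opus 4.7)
The plan is to use the $M$-ideal hypothesis to obtain the $L$-decomposition $\mathcal{P}(^nE)^*=\mathcal{P}_w(^nE)^\perp\oplus_1 \mathcal{P}_w(^nE)^\sharp$ and its dual $\ell_\infty$-decomposition $\mathcal{P}(^nE)^{**}=(\mathcal{P}_w(^nE)^\perp)^*\oplus_\infty \mathcal{P}_w(^nE)^{**}$. Viewing $P\in\mathcal{P}(^nE)$ as an element of its bidual, I would write $P=P^{(1)}+P^{(2)}$ with $P^{(2)}\in\mathcal{P}_w(^nE)^{**}$ and $\|P^{(2)}\|\leq\|P\|$. The idea is then to approximate $P^{(2)}$ by a bounded net in $\mathcal{P}_w(^nE)$ via Goldstine's theorem and to interpret the pointwise convergence of the Aron-Berner extensions as evaluation against the functionals $e_z$, after identifying each $e_z$ with an element of $\mathcal{P}_w(^nE)^\sharp$.

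The step that I expect to require most care is to show that, for every $z\in E^{**}$, the evaluation functional $e_z$ (a priori an element of $\mathcal{P}(^nE)^*$) actually lies in $\mathcal{P}_w(^nE)^\sharp$. By the norm-preserving property of the Aron-Berner extension, $\|e_z\|_{\mathcal{P}(^nE)^*}\leq\|z\|^n$. On the other hand, picking $\{\gamma_k\}_k\subset B_{E^*}$ with $|\gamma_k(z)|\to\|z\|$, the finite type polynomials $\gamma_k^n\in B_{\mathcal{P}_w(^nE)}$ satisfy $|\overline{\gamma_k^n}(z)|=|\gamma_k(z)|^n\to\|z\|^n$, so $\|e_z|_{\mathcal{P}_w(^nE)}\|\geq\|z\|^n$. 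Both norms must therefore coincide with $\|z\|^n$, which is precisely the condition defining $\mathcal{P}_w(^nE)^\sharp$.

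Since $\mathcal{P}_w(^nE)^\sharp$ annihilates $(\mathcal{P}_w(^nE)^\perp)^*$ in the bidual duality, this gives $e_z(P)=e_z(P^{(2)})$, where the right-hand side is read through the isometric identification $\mathcal{P}_w(^nE)^\sharp\cong\mathcal{P}_w(^nE)^*$ pairing with $P^{(2)}\in\mathcal{P}_w(^nE)^{**}$. Finally, Goldstine's theorem applied to $\mathcal{P}_w(^nE)$ produces a bounded net $\{P_\alpha\}_\alpha\subset\mathcal{P}_w(^nE)$ with $P_\alpha\overset{w^*}{\longrightarrow} P^{(2)}$ in $\mathcal{P}_w(^nE)^{**}$, and testing against $e_z|_{\mathcal{P}_w(^nE)}$ yields $\overline{P_\alpha}(z)=e_z(P_\alpha)\to P^{(2)}(e_z)=e_z(P)=\overline{P}(z)$, as required.
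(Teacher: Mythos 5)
Your argument is correct and follows essentially the same route as the paper: the paper invokes \cite[Remark I.1.13]{HWW} to obtain a bounded net in $\mathcal{P}_w(^nE)$ converging to $P$ against $\mathcal{P}_w(^nE)^\sharp$, which is exactly what your bidual $\ell_\infty$-decomposition plus Goldstine produces, and both proofs then conclude by checking that each $e_z$ lies in $\mathcal{P}_w(^nE)^\sharp$. Your explicit verification that $\|e_z\|=\|e_z|_{\mathcal{P}_w(^nE)}\|=\|z\|^n$, via the polynomials $\gamma_k^n$ and the norm-preservation of the Aron--Berner extension, is a nice addition, since the paper asserts that equality without proof.
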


\begin{proof}
By \cite[Remark I.1.13]{HWW}, if $\mathcal{P}_w(^nE)$ is an
$M$-ideal in $\mathcal{P}(^nE)$ then $B_{\mathcal{P}_w(^nE)}$ is
$\sigma\Big(\mathcal{P}(^nE),\mathcal{P}_w(^nE)^*\Big)$-dense in
$B_{\mathcal{P}(^nE)}$. So, for each $P\in B_{\mathcal{P}(^nE)}$
there exists a net $\{P_\alpha\}_\alpha\in B_{\mathcal{P}_w(^nE)}$
such that $P_\alpha\to P$ in the topology
$\sigma\Big(\mathcal{P}(^nE),\mathcal{P}_w(^nE)^*\Big)$.

Note that $\mathcal{P}_w(^nE)^*$ can be seen inside
$\mathcal{P}(^nE)^*$ by the identification with the set
$$
\mathcal{P}_w(^nE)^\sharp=\{\Phi\in \mathcal{P}(^nE)^*:\
\|\Phi\|=\|\Phi|_{\mathcal{P}_w(^nE)}\|\}.
$$
So, since $\|e_z\|=\|e_z|_{\mathcal{P}_w(^nE)}\|$, this implies that $\overline{P}_\alpha(z)\to
\overline{P}(z)$, for all $z\in E^{**}$.
\end{proof}

As a consequence of \cite[Proposition 2.3]{W} and the previous
lemma, we have the following result which can be proved analogously
to \cite[Theorem 3.1]{W}:

\begin{theorem}\label{equivalencias}
Let $E$ be a Banach space. The following are equivalent:
\begin{enumerate}
\item[(i)] $\mathcal{P}_w(^nE)$ is an
$M$-ideal in $\mathcal{P}(^nE)$.
\item[(ii)] For all $P\in \mathcal{P}(^nE)$ there exists a net
$\{P_\alpha\}_\alpha\subset \mathcal{P}_w(^nE)$ such that
$\overline{P}_\alpha(z)\to \overline{P}(z)$, for all $z\in E^{**}$
and
$$
\limsup \|Q+P-P_\alpha\|\leq \max\{\|Q\|, \|Q\|_{es} + \|P\|\},\quad
\textrm{for all } Q\in\mathcal{P}(^nE).
$$
\item[(iii)] For all $P\in \mathcal{P}(^nE)$ there exists a net
$\{P_\alpha\}_\alpha\subset \mathcal{P}_w(^nE)$ such that
$\overline{P}_\alpha(z)\to \overline{P}(z)$, for all $z\in E^{**}$
and
$$
\limsup_\alpha \|Q+P-P_\alpha\|\leq \max\{\|Q\|, \|P\|\},\quad
\textrm{for all } Q\in\mathcal{P}_w(^nE).
$$
\end{enumerate}
\end{theorem}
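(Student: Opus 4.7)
The plan is to establish the cycle $(i) \Rightarrow (ii) \Rightarrow (iii) \Rightarrow (i)$, paralleling Werner's linear result \cite[Theorem 3.1]{W}.

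The implication $(ii) \Rightarrow (iii)$ will be immediate, since for $Q \in \mathcal{P}_w(^nE)$ the essential norm $\|Q\|_{es}$ vanishes, so the right-hand side in $(ii)$ collapses to $\max\{\|Q\|, \|P\|\}$. For $(iii) \Rightarrow (i)$ I would verify the 3-ball property stated in the introduction. Given $P_1, P_2, P_3 \in B_{\mathcal{P}_w(^nE)}$, $Q \in B_{\mathcal{P}(^nE)}$ and $\varepsilon > 0$, apply $(iii)$ with the role of $P$ played by our $Q$: this produces a net $\{Q_\alpha\} \subset \mathcal{P}_w(^nE)$ such that $\overline{Q}_\alpha(z) \to \overline{Q}(z)$ for all $z \in E^{**}$ and
\[
\limsup_\alpha \|R + Q - Q_\alpha\| \leq \max\{\|R\|, \|Q\|\} \leq 1 \qquad \text{for every } R \in \mathcal{P}_w(^nE).
\]
Taking $R = P_i$ in turn for $i = 1,2,3$ and then choosing a single index $\alpha$ far enough out in the net, one can arrange $\|Q + P_i - Q_\alpha\| \leq 1 + \varepsilon$ for all three $i$ simultaneously. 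Setting $P := Q_\alpha \in \mathcal{P}_w(^nE)$ closes this direction.

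The real content is in $(i) \Rightarrow (ii)$. The preceding lemma already furnishes, for each $P \in \mathcal{P}(^nE)$, a bounded net $\{P_\alpha\} \subset \mathcal{P}_w(^nE)$ with $\overline{P}_\alpha(z) \to \overline{P}(z)$ on $E^{**}$; what remains is the sharp norm estimate. The key tool is the $M$-ideal decomposition $\mathcal{P}(^nE)^* = \mathcal{P}_w(^nE)^\perp \oplus_1 \mathcal{P}_w(^nE)^\sharp$. Any $\Phi \in B_{\mathcal{P}(^nE)^*}$ splits as $\Phi = \Phi_1 + \Phi_2$ with $\Phi_1 \in \mathcal{P}_w(^nE)^\perp$, $\Phi_2 \in \mathcal{P}_w(^nE)^\sharp$ and $\|\Phi_1\| + \|\Phi_2\| \leq 1$; since $P_\alpha \in \mathcal{P}_w(^nE)$ and $|\Phi_1(Q)| \leq \|\Phi_1\| \|Q\|_{es}$, one obtains
\[
|\Phi(Q + P - P_\alpha)| \leq \|\Phi_1\| (\|Q\|_{es} + \|P\|) + \|\Phi_2\| \|Q\| + |\Phi_2(P - P_\alpha)|,
\]
so the estimate reduces to making $|\Phi_2(P - P_\alpha)|$ small uniformly for $\Phi_2$ in a prescribed compact set. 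This is achieved by passing to convex combinations $\widetilde P_\beta$ of the $P_\alpha$'s via Mazur's theorem in the topology $\sigma(\mathcal{P}(^nE), \mathcal{P}_w(^nE)^*)$, using the isometric identification $\mathcal{P}_w(^nE)^\sharp = \mathcal{P}_w(^nE)^*$. Convex combinations preserve pointwise convergence of the Aron-Berner extensions by linearity of $\overline{\,\cdot\,}$, so no extra work is needed for that aspect. A standard directed-set construction indexed by finite subsets of $\mathcal{P}(^nE)$ and positive reals then outputs the single net required by $(ii)$.

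The hard part is this last direction: upgrading the merely pointwise-convergent net of the previous lemma into one with the sharp norm bound, and doing so uniformly over all test polynomials $Q$. This is where the full strength of the $M$-ideal property — not just the $\sigma(\mathcal{P}(^nE), \mathcal{P}_w(^nE)^*)$-density of the unit ball — is really used, through the $\ell_1$-decomposition of $\mathcal{P}(^nE)^*$ and the resulting clean splitting of the functional estimate into an ``essential'' and a ``compact'' part.
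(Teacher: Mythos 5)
Your implications (ii)$\Rightarrow$(iii) and (iii)$\Rightarrow$(i) are correct and coincide with the easy steps of Werner's scheme (for the record, the paper does not write these out either; it simply invokes \cite[Proposition 2.3 and Theorem 3.1]{W} together with the preceding lemma). The genuine gap is in (i)$\Rightarrow$(ii). Your reduction
$$
|\Phi(Q+P-P_\alpha)|\;\le\;\|\Phi_1\|\bigl(\|Q\|_{es}+\|P\|\bigr)+\|\Phi_2\|\,\|Q\|+|\Phi_2(P-P_\alpha)|
$$
is fine, but the leftover term must then be controlled uniformly over \emph{all} $\Phi_2$ arising from the $L$-decomposition of some $\Phi\in B_{\mathcal{P}(^nE)^*}$, i.e.\ uniformly over $B_{\mathcal{P}_w(^nE)^\sharp}$ --- not over a compact set, because $\|Q+P-P_\alpha\|$ is a supremum over the whole dual ball. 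Since the point evaluations $e_x$, $x\in B_E$, lie in $B_{\mathcal{P}_w(^nE)^\sharp}$, one has $\sup\{|\Phi_2(P-R)|:\Phi_2\in B_{\mathcal{P}_w(^nE)^\sharp}\}=\|P-R\|\ge\|P\|_{es}$ for \emph{every} $R\in\mathcal{P}_w(^nE)$; so this quantity cannot be made small by any choice of $R$, convex combinations included, unless $P$ was already weakly continuous on bounded sets. The appeal to Mazur is also misplaced: Mazur's theorem upgrades $\sigma(X,X^*)$-convergence to norm convergence, whereas here the convergence is in the strictly weaker topology $\sigma\bigl(\mathcal{P}(^nE),\mathcal{P}_w(^nE)^\sharp\bigr)$, in which the closed convex hull of $B_{\mathcal{P}_w(^nE)}$ is all of $B_{\mathcal{P}(^nE)}$ (that is exactly the content of the preceding lemma), so passing to convex combinations gains nothing.

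What is actually needed at this point is Werner's ``basic inequality'' \cite[Proposition 2.3]{W}, which is the nontrivial core of the theorem and is precisely what the paper imports by citation. Its proof is not a convex-combination upgrade of the net but a delicate subnet argument carried out on almost-norming functionals $\Phi^{(\alpha)}$ for $Q+P-P_\alpha$ and their $L$-decompositions, letting the functional vary with $\alpha$ and passing to weak$^*$-limits of the pieces. Note that some such argument is unavoidable: the desired estimate is \emph{not} valid for an arbitrary bounded net in the $M$-ideal converging in $\sigma(X,J^\sharp)$. Already in the linear model $\mathcal{K}(\ell_2)\subset\mathcal{L}(\ell_2)$, the finite-rank operators $x_m=\pi_m+e_1\otimes e_{m+1}$ converge to the identity against every trace-class functional, yet $\|Id-x_m\|\ge\sqrt{2}>1=\max\{\|0\|,\|0\|_{es}+\|Id\|\}$. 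So merely having $\overline{P}_\alpha(z)\to\overline{P}(z)$ for all $z\in E^{**}$ (which is what the preceding lemma provides) does not by itself yield the norm estimate in (ii); this is the step your sketch does not close.
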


The property $(M)$, introduced by Kalton in \cite{K}, proves to be
useful to characterize the spaces $X$ such that $\mathcal{K}(X)$ is
an $M$-ideal in $\mathcal{L}(X)$. Recall one of its equivalent
definitions \cite[Lemma VI.4.13]{HWW}:

\begin{definition}
A Banach space $E$ has {\bf property $(M)$} if whenever $u,v\in E$
with $\|u\|\leq\|v\|$ and $\{x_\alpha\}_\alpha\subset E$ is a bounded weakly null net, then
$$
\limsup_\alpha \|u+x_\alpha\|\leq \limsup_\alpha \|v+x_\alpha\|.
$$
\end{definition}

In \cite{KW} an operator version of this property is introduced to
study when $\mathcal{K}(X,Y)$ is an $M$-ideal in $\mathcal{L}(X,Y)$.
Here we propose a polynomial version of property $(M)$.

\begin{definition}
Let $P\in\mathcal{P}(^nE)$ with $\|P\|\leq 1$. We say that $P$ has
{\bf property $(M)$} if for all $\lambda\in \mathbb{K}$, $v\in E$
with $|\lambda|\leq\|v\|^n$ and for every bounded weakly null net
$\{x_\alpha\}_\alpha\subset E$, it holds that
$$
\limsup_\alpha |\lambda+Px_\alpha|\leq \limsup_\alpha
\|v+x_\alpha\|^n.
$$
\end{definition}

Analogously to \cite[Lemma 6.2]{KW}, we can prove:

\begin{lemma}\label{redes}
Let $P\in\mathcal{P}(^nE)$ with $\|P\|\leq 1$. If $P$ has property
$(M)$ then for every net $\{v_\alpha\}_\alpha$ contained in a
compact set of $E$, for every net $\{\lambda_\alpha\}_\alpha\subset \mathbb{K}$
with $|\lambda_\alpha|\leq\|v_\alpha\|^n$ and for every bounded
weakly null net $\{x_\alpha\}_\alpha\subset E$, it holds that
$$
\limsup_\alpha |\lambda_\alpha+Px_\alpha|\leq \limsup_\alpha
\|v_\alpha+x_\alpha\|^n.
$$
\end{lemma}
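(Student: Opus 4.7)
The plan is to reduce the net-valued statement to the single-vector version of property $(M)$ by a subnet extraction argument, exploiting that $\{v_\alpha\}_\alpha$ lives in a norm-compact set. The key observation is that on a suitable subnet we can replace the varying $v_\alpha$ and $\lambda_\alpha$ by fixed $v$ and $\lambda$, at the cost of error terms that vanish.

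First I would pass to a subnet along which $|\lambda_\alpha + Px_\alpha|$ converges to $L := \limsup_\alpha |\lambda_\alpha + Px_\alpha|$. Since the original $\{v_\alpha\}_\alpha$ lies in a (relatively) compact set, I can refine this subnet so that $v_\alpha \to v$ in norm for some $v \in E$. The hypothesis $|\lambda_\alpha|\leq \|v_\alpha\|^n$ then yields a bounded scalar net $\{\lambda_\alpha\}$, so by another refinement I may assume $\lambda_\alpha \to \lambda$ for some scalar $\lambda$ with $|\lambda| \leq \|v\|^n$. Throughout, $\{x_\alpha\}_\alpha$ (passed to the corresponding subnet) remains bounded and weakly null, so property $(M)$ of $P$ applied to the fixed $\lambda$ and $v$ gives
\[
\limsup_\alpha |\lambda + Px_\alpha| \leq \limsup_\alpha \|v + x_\alpha\|^n,
\]
where the $\limsup$s are taken along this subnet.

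It remains to transfer these bounds back to $\lambda_\alpha$ and $v_\alpha$. Since $|\lambda_\alpha - \lambda| \to 0$, we have
\[
\bigl| |\lambda_\alpha + Px_\alpha| - |\lambda + Px_\alpha| \bigr| \leq |\lambda_\alpha - \lambda| \longrightarrow 0,
\]
so $L = \lim_\alpha |\lambda_\alpha + Px_\alpha| = \limsup_\alpha |\lambda + Px_\alpha|$ along the subnet. On the other hand, from $\|v + x_\alpha\| \leq \|v_\alpha + x_\alpha\| + \|v - v_\alpha\|$ and $\|v_\alpha - v\|\to 0$, the continuity of $t\mapsto t^n$ on bounded sets yields $\limsup_\alpha \|v + x_\alpha\|^n \leq \limsup_\alpha \|v_\alpha + x_\alpha\|^n$ along the subnet, which is in turn dominated by $\limsup_\alpha \|v_\alpha + x_\alpha\|^n$ along the original net. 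Chaining these inequalities gives $L \leq \limsup_\alpha \|v_\alpha + x_\alpha\|^n$, which is the desired bound. The only mildly delicate point is the bookkeeping with nested subnets; there is no real obstacle, since each refinement only decreases the relevant $\limsup$ on the left and leaves the original $\limsup$ on the right as an upper bound.
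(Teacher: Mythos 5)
Your proof is correct and is essentially the argument the paper intends: the paper only says the lemma is proved ``analogously to \cite[Lemma 6.2]{KW}'', and that proof is precisely your reduction — extract a subnet realizing the left-hand $\limsup$, use norm-compactness to fix $v_\alpha\to v$ and boundedness to fix $\lambda_\alpha\to\lambda$ with $|\lambda|\le\|v\|^n$, apply the single-vector property $(M)$ along the (still bounded, weakly null) subnet, and absorb the vanishing error terms. The bookkeeping you flag is handled correctly, since passing to subnets can only decrease the right-hand $\limsup$ relative to the original net.
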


\begin{definition}
We say that a Banach space $E$ has the {\bf $n$-polynomial property
$(M)$} if every $P\in\mathcal{P}(^nE)$ with $\|P\|\leq 1$ has
property $(M)$.
\end{definition}

The following proposition and theorem are the polynomial versions of
\cite[Theorem 6.3]{KW} and their proofs follow the ideas of the
proof of that theorem, with the necessary changes to the polynomial
setting.

\begin{proposition}
If $\mathcal{P}_w(^nE)$ is an $M$-ideal in $\mathcal{P}(^nE)$ then
$E$ has the $n$-polynomial property $(M)$.
\end{proposition}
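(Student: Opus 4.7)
The plan is to apply condition (iii) of Theorem \ref{equivalencias} to the given $P\in\mathcal{P}(^nE)$ with $\|P\|\le 1$, obtaining an approximating net $\{P_\alpha\}_\alpha\subset\mathcal{P}_w(^nE)$, to introduce a finite-type test polynomial $Q$ with $Q(v)=\lambda$ and $\|Q\|\le 1$, and to evaluate the norm inequality $|(Q+P-P_\alpha)(v+x_\beta)|\le \|Q+P-P_\alpha\|\,\|v+x_\beta\|^n$ at the translates $v+x_\beta$. The essential input will be that every lower-degree homogeneous polynomial on $E$ is weakly continuous on bounded sets, which forces the cross terms in the symmetric expansion of $P(v+x_\beta)$ to vanish asymptotically.

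After disposing of the trivial case $v=0$ (where then $\lambda=0$ and the conclusion is immediate from $\|P\|\le 1$), assume $v\ne 0$. A Hahn-Banach functional $\phi\in E^*$ with $\|\phi\|=1$ and $\phi(v)=\|v\|$ lets me define
\[
Q(x)=\frac{\lambda}{\|v\|^n}\,\phi(x)^n\in \mathcal{P}_f(^nE)\subset \mathcal{P}_w(^nE),
\]
which satisfies $Q(v)=\lambda$ and $\|Q\|\le |\lambda|/\|v\|^n\le 1$. By Corollary \ref{w-w0} combined with Remark \ref{n-unico}, the hypothesis forces $\mathcal{P}_w(^kE)=\mathcal{P}(^kE)$ for every $k<n$, so each $k$-homogeneous polynomial $x\mapsto \overset{\vee}{P}(v^{n-k},x^k)$, $1\le k\le n-1$, is weakly continuous on bounded sets. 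The symmetric expansion
\[
P(v+x_\beta)=P(v)+\sum_{k=1}^{n-1}\binom{n}{k}\,\overset{\vee}{P}(v^{n-k},x_\beta^k)+P(x_\beta)
\]
then gives $P(v+x_\beta)-P(v)-P(x_\beta)\to 0$ along any bounded weakly null net $\{x_\beta\}_\beta$, and since $Q$ and each $P_\alpha$ are themselves weakly continuous on bounded sets one also has $Q(v+x_\beta)\to\lambda$ and $P_\alpha(v+x_\beta)\to P_\alpha(v)$.

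Plugging these three asymptotic identities into the norm inequality and taking $\limsup_\beta$ yields, for each $\alpha$,
\[
\limsup_\beta\bigl|\lambda+P(x_\beta)+\bigl(P(v)-P_\alpha(v)\bigr)\bigr|\le \|Q+P-P_\alpha\|\,\limsup_\beta\|v+x_\beta\|^n.
\]
Theorem \ref{equivalencias}(iii) furnishes $P_\alpha(v)=\overline{P}_\alpha(v)\to \overline{P}(v)=P(v)$ and $\limsup_\alpha \|Q+P-P_\alpha\|\le \max\{\|Q\|,\|P\|\}\le 1$. A single triangle-inequality step absorbs the $\beta$-independent term $P(v)-P_\alpha(v)$ into an error $|P(v)-P_\alpha(v)|$ on the right, after which passing to $\liminf_\alpha$ (using $|P(v)-P_\alpha(v)|\to 0$ together with $\liminf_\alpha\|Q+P-P_\alpha\|\le 1$) produces
\[
\limsup_\beta|\lambda+P(x_\beta)|\le \limsup_\beta\|v+x_\beta\|^n,
\]
which is the polynomial property $(M)$ for $P$.

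The principal obstacle is establishing the splitting $P(v+x_\beta)=P(v)+P(x_\beta)+o(1)$, since this is what couples the $M$-ideal approximation (inherently a statement about $Q+P-P_\alpha$ evaluated on translates of $v$) with the expression $\lambda+P(x_\beta)$ that appears in the definition; this is precisely the point at which the critical-degree consequences of the $M$-ideal hypothesis recorded in Corollary \ref{w-w0} and Remark \ref{n-unico} are brought to bear.
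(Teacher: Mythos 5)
Your proof is correct and follows essentially the same route as the paper's: both invoke Theorem \ref{equivalencias}(iii) with a norming polynomial $Q\in\mathcal{P}_w(^nE)$ satisfying $Q(v)=\lambda$, $\|Q\|\le 1$, and both exploit the fact that the $M$-ideal hypothesis forces $\mathcal{P}_w(^kE)=\mathcal{P}(^kE)$ for $k<n$ to kill the cross terms in the expansion of an $n$-homogeneous polynomial at $v+x_\beta$. The only differences are cosmetic bookkeeping: you make $Q$ explicit as a finite-type polynomial and keep the whole approximating net (taking $\liminf_\alpha$ at the end), whereas the paper fixes $\varepsilon$ and a single approximant $R$ and applies the splitting to $P-R$ rather than to $P$ and $P_\alpha$ separately.
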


\begin{proof}
Let $P\in\mathcal{P}(^nE)$ with $\|P\|\leq 1$, let $\lambda\in
\mathbb{K}$ and $v\in E$ with $|\lambda|\leq\|v\|^n$. Consider a bounded weakly null net
$\{x_\alpha\}_\alpha\subset E$. Take
$Q\in \mathcal{P}_w(^nE)$ such that $\|Q\|\leq 1$ and
$Q(v)=\lambda$. Given $\varepsilon >0$, by Theorem
\ref{equivalencias}(iii), there exists a polynomial
$R\in\mathcal{P}_w(^nE)$ such that
$$
|Pv-Rv|<\varepsilon\qquad \textrm{ and }\qquad \|Q+P-R\|\leq
1+\varepsilon.
$$
Since $Q(v+x_\alpha)\to Q(v)$ and $R(x_\alpha)\to 0$ we obtain:
\begin{eqnarray*}
\limsup_\alpha |\lambda+Px_\alpha| &=& \limsup_\alpha
|Q(v)+Px_\alpha|= \limsup_\alpha |Q(v+x_\alpha)+(P-R)x_\alpha|\\
&\leq & \limsup_\alpha |Q(v+x_\alpha)+(P-R)x_\alpha +
(P-R)(v)|+\varepsilon.
\end{eqnarray*}
Recall that the fact that $\mathcal{P}_w(^nE)$ is an $M$-ideal in
$\mathcal{P}(^nE)$, implies that for every $1\leq k\leq n-1$, all the polynomials
in $\mathcal{P}(^kE)$ are weakly continuous on bounded sets. So we
derive that
$$
\left|(P-R)(v+x_\alpha)-\big[(P-R)(v)+(P-R)x_\alpha\big]\right|=
\left|\sum_{j=1}^{n-1} \binom{n}{j} (P-R)^\vee
(v^j,x_\alpha^{n-j})\right|\underset{\alpha}{\longrightarrow} 0.
$$
This implies the following:
\begin{eqnarray*}
\limsup_\alpha |\lambda+Px_\alpha| &\leq& \limsup_\alpha
|Q(v+x_\alpha)+(P-R)(v+x_\alpha)|+\varepsilon\\
&\leq &(1+\varepsilon)\limsup_\alpha \|v+x_\alpha\|^n+\varepsilon.
\end{eqnarray*}
Being $\varepsilon$ arbitrary, the proof is complete.
\end{proof}

\begin{example}\label{Lp-no-es}\rm
The previous proposition implies that $\mathcal{P}_w(^2L_p[0,1])$ is
not an $M$-ideal in $\mathcal{P}(^2L_p[0,1])$, for $1<p<2$, because
$L_p[0,1]$ does not have the $2$-polynomial property $(M)$. Indeed,
let $\{r_n\}_n$ be the sequence of Rademacher functions and consider
the polynomial $P\in \mathcal{P}(^2L_p[0,1])$ given by
$$P(f)=\sum_n\left(\int fr_n\, d\mu \right)^2.$$ The norm of this polynomial is bounded by $B_{p'}^2$, where $B_{p'}$ is the upper bound on Khintchine inequality (this can be derived from the norm of the usual projection from $L_p[0,1]$ to $\ell_2$ \cite[Proposition 6.4.2]{AK}).
So, the polynomial $\frac{P}{B_{p'}^2}$ has norm smaller than 1 and we see that it does not have the property $(M)$. The
inequality fails when we consider the sequence
$\{r_n\}_n$, which is  weakly null, and the function $g\equiv 1$. Being $P(r_n)=1$, for all $n$, and $\|g\|=1$, we see that
$$
\left|1+\frac{P}{B_{p'}^2}(r_n)\right|=1+\frac{1}{B_{p'}^2}, \qquad\textrm{ while }\qquad
\|g+r_n\|^2=2^{2/p'}, \textrm{ for all }n.
$$
We conclude our argument by proving the inequality:
$$
2^{2/p'}<1+\frac{1}{B_{p'}^2}.
$$
From now on, we denote  $q=p'$. Since $q>2$, we know from \cite{Ha} that:

$$
B_q=\sqrt{2}\left(\frac{\Gamma\left(\frac{q+1}{2}\right)}{\sqrt{\pi}}\right)^{1/q}.
$$ This means that the inequality that we want to prove is the following:
$$
2^{2/q} < 1 +  \frac{1}{2}\cdot\left(\frac{\sqrt \pi}{\Gamma \left(\frac{q+1}{2}\right)}\right)^{2/q}, \textrm{ for every }q>2.
$$

Our first step is to prove an adequate bound for the gamma function:
\begin{equation}\label{gama}
\Gamma\left(\frac{q+1}{2}\right)\leq \left(\frac{q}{4}\right)^{q/2}\sqrt{\pi}, \textrm{ for every }q\geq 2.
\end{equation}

This inequality is equivalent to show the negativity of the function
$$
h(q)=\log\left(\Gamma\left(\frac{q+1}{2}\right)\right)-\frac{q}{2}\log\left(\frac{q}{4}\right) -\log(\sqrt{\pi}),
$$ in the interval $[2,+\infty)$.

The derivative of $h$ is
$$
h'(q)=\frac{1}{2}\left[\psi\left(\frac{q+1}{2}\right)-\log\left(\frac{q}{4}\right)-1\right],
$$
where $\psi(x)=\frac{\Gamma'(x)}{\Gamma(x)}$ is the digamma function, that is the derivative of the logarithm of the gamma function. The function $\psi$ satisfies the following inequality, for all $x>0$, (see, for instance \cite[(2.2)]{Al}):
$$
\psi(x)<\log(x)-\frac{1}{2x}.
$$

Thus, we obtain that
$$
h'(q)< \frac{1}{2}\left[\log\left(\frac{q+1}{2}\right)-\frac{1}{q+1}- \log\left(\frac{q}{4}\right)-1\right]\leq 0.
$$

Consequently, $h$ is decreasing and  $h(q)< h(2)=0$.

This proves the validity  of equation (\ref{gama}), which implies that
$$
1 +  \frac{1}{2}\cdot\left(\frac{\sqrt \pi}{\Gamma \left(\frac{q+1}{2}\right)}\right)^{2/q}\geq 1+\frac{1}{2}\left(\frac{q}{4}\right) = 1+\frac{2}{q}.
$$

The conclusion holds since $1+\frac{2}{q}>2^{2/q}$, for all $q>2$, because $f(q)=\left(1+\frac{2}{q}\right)^{q/2}$ is a strictly increasing function and $f(2)=2$.

\end{example}

\begin{theorem}\label{n-prop-M}
Let $E$ be a Banach space and suppose that there exists a net
$\{K_\alpha\}_\alpha$ of compact operators from $E$ to $E$
satisfying the following two conditions:
\begin{itemize}
\item $K_\alpha x\to x$, for all $x\in E$ and $K_\alpha^* \gamma \to \gamma$, for all $\gamma\in E^*$.
\item $\|Id - 2 K_\alpha\|\underset{\alpha}{\longrightarrow} 1$.
\end{itemize}
Suppose also that $n=cd(E)$. Then,
$\mathcal{P}_w(^nE)$ is an $M$-ideal in $\mathcal{P}(^nE)$ if and
only if $E$ has the $n$-polynomial property $(M)$.
\end{theorem}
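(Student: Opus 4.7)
The necessity is the preceding proposition. For sufficiency, assume $E$ has the $n$-polynomial property $(M)$ along with the net $\{K_\alpha\}_\alpha$ of compact operators. The plan is to verify the 3-ball property from \cite[Theorem I.2.2]{HWW}. Given $P_1,P_2,P_3\in B_{\mathcal{P}_w(^nE)}$, $Q\in B_{\mathcal{P}(^nE)}$ and $\varepsilon>0$, I will take $P=Q\circ K_\alpha$ as the approximant, for a sufficiently large index $\alpha$. As in the proof of Proposition \ref{condition}, the compactness of $K_\alpha$ sends bounded weakly null nets to norm null ones, so $Q\circ K_\alpha$ is weakly continuous on bounded sets at $0$, and $n=cd(E)$ then places it in $\mathcal{P}_w(^nE)$.

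By Lemma \ref{Pw}, $\|P_i-P_i\circ K_\alpha\|\to 0$, so up to an $\varepsilon/2$ error the task reduces to bounding $\|Q+P_i\circ K_\alpha-Q\circ K_\alpha\|$ by $1+\varepsilon/2$. For $x\in B_E$, setting $u=K_\alpha x$ and $y=x-K_\alpha x$ and expanding $Q(u+y)$ yields
$$(Q+P_i\circ K_\alpha-Q\circ K_\alpha)(x)=P_i(u)+Q(y)+\sum_{j=1}^{n-1}\binom{n}{j}Q^\vee(u^{n-j},y^j).$$
From $K_\alpha^*\gamma\to\gamma$ one has $\sup_{x\in B_E}|\gamma(x-K_\alpha x)|=\|\gamma-K_\alpha^*\gamma\|\to 0$ for every $\gamma\in E^*$, so for any $x_\alpha\in B_E$ nearly attaining the supremum, the net $y_\alpha=x_\alpha-K_\alpha x_\alpha$ is bounded and weakly null. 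After passing to a subnet on which $u_\alpha=K_\alpha x_\alpha$ lies in a relatively compact subset of $E$, Lemma \ref{redes} together with the $n$-polynomial property $(M)$ applied to $Q$ (with $\lambda_\alpha=P_i(u_\alpha)$ satisfying $|\lambda_\alpha|\leq\|u_\alpha\|^n$) yields
$$\limsup_\alpha|P_i(u_\alpha)+Q(y_\alpha)|\leq\limsup_\alpha\|u_\alpha+y_\alpha\|^n=\limsup_\alpha\|x_\alpha\|^n\leq 1,$$
while each cross term $Q^\vee(u_\alpha^{n-j},y_\alpha^j)$ with $1\leq j\leq n-1$ vanishes in the limit, because $j<n=cd(E)$ makes the polynomial $y\mapsto Q^\vee(v^{n-j},y^j)$ weakly continuous on bounded sets, with a continuous $(n-j)$-homogeneous dependence on $v$ over the compact range of $\{u_\alpha\}$.

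The principal obstacle is the subnet extraction placing $\{u_\alpha\}$ in a single relatively compact subset of $E$: for each fixed $\alpha$, $u_\alpha\in K_\alpha(B_E)$ is in a compact set, but these sets vary with $\alpha$. The hypothesis $\|Id-2K_\alpha\|\to 1$, in concert with $K_\alpha x\to x$ and $K_\alpha^*\gamma\to\gamma$, provides the asymptotic ``orthogonality'' between the compact component $u_\alpha$ and the weakly null component $y_\alpha$ needed to carry out this extraction, in the spirit of \cite[Theorem 6.3]{KW}. Combining the above estimates gives $\limsup_\alpha\|Q+P_i-Q\circ K_\alpha\|\leq 1+\varepsilon$, and choosing $\alpha$ large enough simultaneously for $i=1,2,3$ verifies the 3-ball property and concludes the proof.
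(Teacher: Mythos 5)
Your reduction to the 3-ball property and your appeal to Lemma \ref{redes} are in the right spirit, but the argument has a genuine gap --- one you flag yourself and then do not close. The net $u_\alpha=K_\alpha x_\alpha$ does not lie in a single relatively compact subset of $E$, because the operator index and the point index are the same $\alpha$; Lemma \ref{redes} (and the $n$-polynomial property $(M)$ behind it) can only be invoked when the net $\{v_\alpha\}$ sits inside one compact set. Asserting that $\|Id-2K_\alpha\|\to 1$ ``provides the asymptotic orthogonality needed to carry out this extraction'' is not a proof, and in general no subnet extraction will work (take $K_m=\pi_m$ and $x_m=e_m$ in $\ell_p$: then $K_mx_m=e_m$ has no norm-convergent subnet). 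The paper resolves this by decoupling the two indices: first fix $\beta$ with $\|P_i-P_i\circ K_\beta\|\le\varepsilon/2$ and $\|Id-2K_\beta\|^n\le 1+\varepsilon/2$, replace $P_i$ by $P_i\circ K_\beta$, and let the approximant $P(x)=Q(x)-Q(x-K_\alpha x)$ carry the \emph{varying} index $\alpha$. The quantity to estimate becomes $|P_i(K_\beta x_\alpha)+Q(x_\alpha-K_\alpha x_\alpha)|$, where now $\{K_\beta x_\alpha\}_\alpha\subset K_\beta(B_E)$ is genuinely relatively compact, Lemma \ref{redes} applies, and one concludes via $\limsup_\alpha\|K_\beta+Id-K_\alpha\|\le\|Id-2K_\beta\|$ (proved in \cite[p.~300]{HWW}); that inequality, not a compactness extraction, is where the hypothesis $\|Id-2K_\alpha\|\to 1$ actually enters.

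A secondary point: your choice of approximant $P=Q\circ K_\alpha$ forces you to handle the cross terms $Q^\vee(u^{n-j},y^j)$, and killing them uniformly over $v$ ranging in a compact set needs an extra $\varepsilon$-net argument that you only sketch. The paper's choice $P(x)=Q(x)-Q(x-K_\alpha x)$ makes $Q-P=Q\circ(Id-K_\alpha)$ \emph{exactly}, so no cross terms appear; this is the same device as in Proposition \ref{condition}, and it also makes the verification that $P\in\mathcal{P}_w(^nE)$ immediate. I would rewrite the sufficiency proof along these lines.
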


\begin{proof}
One direction follows from the previous proposition. For the other,
we will verify the 3-ball property. Let $P_1, P_2, P_3\in
B_{\mathcal{P}_w(^nE)}$, $Q\in B_{\mathcal{P}(^nE)}$ and
$\varepsilon >0$. We will prove that, for $\alpha$ large enough, the
polynomial $P(x)=Q(x)-Q(x-K_\alpha x)$ satisfies that
$\|Q+P_i-P\|\leq 1+\varepsilon$. As in the proof of Proposition
\ref{condition}, it can be seen that $P$ is weakly continuous on
bounded sets.

Let $\beta$ such that
$$
\|Id - 2K_{\beta}\|^n\leq 1+\frac{\varepsilon}{2}\qquad \textrm{ and
} \qquad \|P_i-P_i\circ K_{\beta}\|\leq
\frac{\varepsilon}{2},\quad\textrm{for all }i=1,2,3.
$$
(Recall that from Lemma \ref{Pw}, $\|P_i-P_i\circ K_{\alpha}\|\to
0$). We have that
$$
\|Q+P_i-P\|\leq \|Q+P_i\circ K_\beta-P\| + \|P_i-P_i\circ
K_{\beta}\|\leq \|P_i\circ K_\beta+Q\circ(Id-K_\alpha)\| +
\frac{\varepsilon}{2}.
$$
Let $\{x_\alpha\}_\alpha\subset B_E$ such that
$$
\limsup_\alpha \|P_1\circ K_\beta+Q\circ(Id-K_\alpha)\|=
\limsup_\alpha |P_1(K_\beta x_\alpha)+Q(x_\alpha-K_\alpha
x_\alpha)|.
$$
Since $|P_1(K_\beta x_\alpha)|\leq \|K_\beta x_\alpha\|^n$,
$\{K_\beta x_\alpha\}_\alpha$ is contained in a compact set of $E$,
 $\{x_\alpha -K_\alpha x_\alpha\}_\alpha$ is a bounded weakly
null net and $E$ has the $n$-polynomial property $(M)$, from Lemma
\ref{redes}, we get
\begin{eqnarray*}
\limsup_\alpha |P_1(K_\beta x_\alpha)+Q(x_\alpha-K_\alpha x_\alpha)|
&\leq & \limsup_\alpha \|K_\beta x_\alpha+x_\alpha-K_\alpha
x_\alpha\|^n\\
&\leq & \limsup_\alpha \|K_\beta +Id-K_\alpha\|^n\\
&\leq & \|Id-2K_\beta\|^n \leq 1+\frac{\varepsilon}{2},
\end{eqnarray*}
where the inequality of the last line is proved in \cite[page
300]{HWW}.

Therefore,
$$
\limsup_\alpha \|Q+P_1-(Q-Q\circ(Id-K_\alpha))\|\leq 1+\varepsilon.
$$
So, there exists a subnet $\{K_\gamma\}_\gamma$ of
$\{K_\alpha\}_\alpha$ such that
$$
\lim_\gamma \|Q+P_1-(Q-Q\circ(Id-K_\gamma))\|\leq 1+\varepsilon.
$$
With the same argument, taking further subnets, we obtain the
inequality for $P_2$ and $P_3$. Thus, the 3-ball property is proved.
\end{proof}

The following proposition is the polynomial analogous of \cite[Lemma
VI.4.14]{HWW} (and again we borrow some ideas from that proof) and
enables us to obtain a link with the linear theory.

\begin{proposition}\label{M}
Let $E$ be a Banach space and
$n=cd(E)$. If $E$ has the property
$(M)$, then $E$ has the $n$-polynomial property $(M)$.
\end{proposition}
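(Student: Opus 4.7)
The plan is to combine three ingredients: the critical-degree hypothesis, used to linearize $P$ along weakly null nets; the linear property $(M)$ of $E$, used to reduce $\limsup\|u+x_\alpha\|$ to a function of $\|u\|$ alone; and a homogeneity-scaling argument, used to pass from the unit-norm case to arbitrary $\|P\|\leq 1$.

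First, I would fix a bounded weakly null net $\{x_\alpha\}$ and define $\phi(s):=\limsup_\alpha\|s\hat v+x_\alpha\|$ for any unit vector $\hat v\in E$; by property $(M)$ this value depends only on $s$, and the resulting map $\phi\colon[0,\infty)\to[0,\infty)$ is non-decreasing, $1$-Lipschitz (triangle inequality) and satisfies $\phi(s)\geq s$ (weak lower semicontinuity of the norm, since $s\hat v+x_\alpha\overset{w}{\to}s\hat v$). Next, using $n=cd(E)$, every $k$-homogeneous polynomial on $E$ with $1\leq k\leq n-1$ lies in $\mathcal{P}_w(^kE)$, so in the symmetric expansion
\[
P(u+x_\alpha)-P(u)-P(x_\alpha)=\sum_{j=1}^{n-1}\binom{n}{j}\v P(u^{n-j},x_\alpha^j)
\]
each cross term, viewed as a polynomial in $x_\alpha$ of degree $j<n$, tends to $0$. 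Together with $\|P\|\leq 1$ (so $|P(u+x_\alpha)|\leq\|u+x_\alpha\|^n$) this gives the envelope estimate $\limsup_\alpha|P(u)+P(x_\alpha)|\leq\phi(\|u\|)^n$ valid for every $u\in E$.

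Second, I would treat the unit-norm case $\|P\|=1$ by selecting an approximate witness. Given $\varepsilon>0$, pick $w\in S_E$ with $|P(w)|>1-\varepsilon$ and choose $t\in\mathbb{K}$ with $t^nP(w)=\lambda$ (in the complex case this only requires an $n$-th root; in the real case with $n$ even one may also need to replace $w$ by $-w$ to align signs). Setting $u_\varepsilon:=tw$ produces $P(u_\varepsilon)=\lambda$ with $\|u_\varepsilon\|^n=|\lambda|/|P(w)|\leq\|v\|^n/(1-\varepsilon)$. Feeding $u_\varepsilon$ into the envelope inequality and exploiting continuity of $\phi$ as $\varepsilon\to 0$ yields $\limsup|\lambda+P(x_\alpha)|\leq\phi(\|v\|)^n=\limsup\|v+x_\alpha\|^n$.

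Finally, I would reduce the case $0<\|P\|<1$ to the unit-norm case. Set $c:=\|P\|^{1/n}\leq 1$, $P':=P/\|P\|$, $\mu:=\lambda/\|P\|$, and $v':=v/c$; since $\|P'\|=1$ and $|\mu|\leq\|v'\|^n$, the previous step applied to $P'$ gives $\limsup|\mu+P'(x_\alpha)|\leq\phi(\|v\|/c)^n$, and multiplying by $\|P\|=c^n$ produces $\limsup|\lambda+P(x_\alpha)|\leq\bigl(c\,\phi(\|v\|/c)\bigr)^n$. The key remaining ingredient is the monotonicity inequality $c\,\phi(s/c)\leq\phi(s)$ for $0<c\leq 1$, equivalent to $s\mapsto\phi(s)/s$ being non-increasing; a short calculation shows this is a direct consequence of the $1$-Lipschitz property of $\phi$ together with $\phi(s)\geq s$. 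I expect this scaling reduction to be the main obstacle: a direct approach chooses $u$ with $P(u)=\lambda$ and $\|u\|>\|v\|$ whenever $|\lambda|>\|P\|\,\|v\|^n$, which leaves a stray factor $1/\|P\|$ in the resulting bound, and only the precise monotonicity of $\phi(s)/s$ can absorb it.
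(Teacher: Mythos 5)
Your overall strategy is the same as the paper's: produce an (approximate) $n$-th root of $\lambda$ under $P$, kill the cross terms in the binomial expansion using $\mathcal{P}(^kE)=\mathcal{P}_w(^kE)$ for $k<n$, and then invoke property $(M)$. Your envelope estimate and the passage $\varepsilon\to 0$ in the unit-norm complex case are fine. Your reduction from $\|P\|<1$ to $\|P\|=1$ is genuinely different from the paper's: you rescale $v$ to $v/\|P\|^{1/n}$ and absorb the factor via the monotonicity of $s\mapsto\phi(s)/s$, which does indeed follow from $\phi$ being $1$-Lipschitz and $\phi(s)\geq s$ (for $s_1<s_2$, $s_1\phi(s_2)\leq s_1\phi(s_1)+s_1(s_2-s_1)\leq s_2\phi(s_1)$ exactly because $\phi(s_1)\geq s_1$). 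The paper avoids this entirely by writing $\lambda+Px_\alpha$ as a convex combination of $\lambda\pm\frac{P}{\|P\|}x_\alpha$ and applying the unit-norm case twice with the \emph{same} $v$ and $\lambda$; that is shorter, but your route is valid.

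The genuine gap is in the real scalars, $n$ even case. Your proposed fix --- ``replace $w$ by $-w$ to align signs'' --- does nothing: $P(-w)=(-1)^nP(w)=P(w)$ when $n$ is even, so the sign of $P(w)$ cannot be changed this way, and $t^nP(w)=\lambda$ has no real solution $t$ when $P(w)$ and $\lambda$ have opposite signs. This is precisely the situation (e.g.\ $P\geq 0$ everywhere and $\lambda<0$) in which no $u$ with $P(u)=\lambda$ exists at all, so no choice of witness can repair the argument; a different mechanism is needed. The paper's device is the elementary inequality $|\lambda+Px_\alpha|\leq|-\lambda+Px_\alpha|$, valid when $Px_\alpha$ and $-\lambda$ have the same sign, which reduces that case to the one already treated with $-\lambda$ in place of $\lambda$. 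You would need to incorporate something of this kind (and, if $P$ takes both signs, to split the net according to the sign of $Px_\alpha$ before applying it) to close the real even-degree case.
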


\begin{proof}
Let $P\in\mathcal{P}(^nE)$ with $\|P\|\leq 1$,  $\lambda\in
\mathbb{K}$, $v\in E$ with $|\lambda|\leq\|v\|^n$ and let
$\{x_\alpha\}_\alpha\subset E$ be a bounded weakly null net. We want to
prove that
$$
\limsup_\alpha |\lambda+Px_\alpha|\leq \limsup_\alpha
\|v+x_\alpha\|^n.
$$
Suppose first that $\|P\|=1$ and that $E$ is a complex Banach space.
Given $\varepsilon >0$, there exists $u_\varepsilon\in E$ such that
$P(u_\varepsilon) =\lambda(1-\varepsilon)$ and
$\|u_\varepsilon\|\leq |\lambda|^{1/n}$ (and so $\|u_\varepsilon\|$
is smaller than $\|v\|$). Thus,
\begin{eqnarray*}
\limsup_\alpha |\lambda+Px_\alpha|&\leq & \limsup_\alpha
|Pu_\varepsilon+Px_\alpha| + |\lambda|\varepsilon \\
&=&\limsup_\alpha |P(u_\varepsilon+x_\alpha)| +
|\lambda|\varepsilon\qquad \textrm{ (since
}\mathcal{P}(^kE)=\mathcal{P}_{w}(^kE),\textrm{ for
} k<n\textrm{)}\\
&\leq & \|P\| \limsup_\alpha \|u_\varepsilon+x_\alpha\|^n +
|\lambda|\varepsilon\\
&\leq & \limsup_\alpha \|v+x_\alpha\|^n +|\lambda|\varepsilon.
\end{eqnarray*}
Since $\varepsilon$ is arbitrary, the inequality is proved.

If $E$ is real, the same argument works, except
in the case that $n$ is even, $Px\geq 0$, for all $x\in E$, and
$\lambda <0$ (or, $Px\leq 0$, for all $x\in E$, and $\lambda >0$).
But, if this is the case, then
$$
|\lambda + Px_\alpha|\leq |-\lambda + Px_\alpha|,
$$
and the above steps prove that $\limsup_\alpha |-\lambda+Px_\alpha|\leq \limsup_\alpha
\|v+x_\alpha\|^n.$

Let us consider now that $\|P\|<1$.  Being $(\lambda +Px_\alpha)$ a
convex combination of $\Big(\lambda +\frac{P}{\|P\|}x_\alpha\Big)$
and $\Big(\lambda -\frac{P}{\|P\|}x_\alpha\Big)$, we get that
$$
|\lambda +Px_\alpha|\leq \max\left\{\left|\lambda
+\frac{P}{\|P\|}x_\alpha\right|,\left|\lambda
-\frac{P}{\|P\|}x_\alpha\right|\right\}.
$$
Therefore,
\begin{eqnarray*}
\limsup_\alpha |\lambda+Px_\alpha|&\leq &
\max\left\{\limsup_\alpha\left|\lambda
+\frac{P}{\|P\|}x_\alpha\right|,\limsup_\alpha\left|\lambda
-\frac{P}{\|P\|}x_\alpha\right|\right\}\\
&\leq & \limsup_\alpha \|v+x_\alpha\|^n.
\end{eqnarray*}
\end{proof}

Now we derive a relationship with the linear theory that enable us to produce more examples of polynomial $M$-structures.

\begin{corollary}
Let $E$ be a Banach space and
$n=cd(E)$. If $\mathcal{K}(E)$ is
an $M$-ideal in $\mathcal{L}(E)$, then $\mathcal{P}_w(^nE)$ is an
$M$-ideal in $\mathcal{P}(^nE)$.
\end{corollary}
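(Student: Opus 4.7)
The plan is to combine Theorem \ref{n-prop-M} with Proposition \ref{M}, using the classical characterization from the linear theory that relates the $M$-ideal property of $\mathcal{K}(E)$ in $\mathcal{L}(E)$ to the existence of a net of compact operators with good properties together with property $(M)$.

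First, I would invoke the linear theory as developed in \cite{HWW} (essentially Theorem VI.4.17 and its surrounding results, via Kalton's work \cite{K}): whenever $\mathcal{K}(E)$ is an $M$-ideal in $\mathcal{L}(E)$, the space $E$ admits a \emph{shrinking compact approximation of the identity} $\{K_\alpha\}_\alpha$ satisfying $K_\alpha x \to x$ for every $x\in E$, $K_\alpha^*\gamma \to \gamma$ for every $\gamma\in E^*$, and $\|Id - 2K_\alpha\|\to 1$. These are precisely the two hypotheses required by Theorem \ref{n-prop-M}. In addition, $E$ has property $(M)$ in the sense of Kalton's definition.

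Next, by Proposition \ref{M}, property $(M)$ together with the hypothesis $n=cd(E)$ gives that $E$ has the $n$-polynomial property $(M)$. Then all hypotheses of Theorem \ref{n-prop-M} are in force: the net $\{K_\alpha\}_\alpha$ satisfies the two conditions, $n=cd(E)$, and $E$ has the $n$-polynomial property $(M)$. The ``if'' direction of Theorem \ref{n-prop-M} yields directly that $\mathcal{P}_w(^nE)$ is an $M$-ideal in $\mathcal{P}(^nE)$.

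The main (essentially only) obstacle is really bookkeeping: one must quote the correct statement from \cite{HWW} to the effect that $\mathcal{K}(E)$ being an $M$-ideal in $\mathcal{L}(E)$ supplies simultaneously both (i) a net of compact operators with the required three-ball-style properties and (ii) Kalton's property $(M)$ for $E$ itself. Once this is done, the proof is a two-line concatenation: linear $M$-structure $\Longrightarrow$ property $(M)$ $\Longrightarrow$ (by Proposition \ref{M}) $n$-polynomial property $(M)$ $\Longrightarrow$ (by Theorem \ref{n-prop-M}) polynomial $M$-structure. No further computation is needed.
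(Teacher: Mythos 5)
Your argument is correct and coincides with the paper's own proof: both invoke \cite[Theorem VI.4.17]{HWW} to extract property $(M)$ together with a net of compact operators satisfying the two conditions of Theorem \ref{n-prop-M}, then apply Proposition \ref{M} to pass to the $n$-polynomial property $(M)$, and conclude via Theorem \ref{n-prop-M}. Nothing is missing.
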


\begin{proof}
By \cite[Theorem VI.4.17]{HWW}, if $\mathcal{K}(E)$ is an $M$-ideal
in $\mathcal{L}(E)$, then $E$ has property $(M)$ and there exists a
net $\{K_\alpha\}_\alpha\subset \mathcal{K}(E)$ satisfying $K_\alpha x\to x$, for all $x\in E$,
 $K_\alpha^* \gamma \to \gamma$, for all $\gamma\in E^*$ and
$\|Id - 2 K_\alpha\|\underset{\alpha}{\longrightarrow} 1$. As a
consequence of Theorem \ref{n-prop-M} and Proposition \ref{M}, it
follows that $\mathcal{P}_w(^nE)$ is an $M$-ideal in
$\mathcal{P}(^nE)$.
\end{proof}

\begin{remark}\rm
The reciprocal of the previous corollary does not hold. Indeed, in
Example \ref{Lp}, we see that for $1<p<2$, there is a renorming $E$
of $L_p[0,1]$ such that $\mathcal{P}_w(^2E)$ is an $M$-ideal in
$\mathcal{P}(^2E)$.  But, from \cite[Corollary VI.6.10]{HWW}, we
know that $L_p[0,1]$ can not be renormed to a Banach space $E$ which
makes $\mathcal{K}(E)$  an $M$-ideal in $\mathcal{L}(E)$.
\end{remark}

Note that, by the previous corollary, all the known examples of spaces $E$ such that $\mathcal{K}(E)$ is an $M$-ideal in $\mathcal{L}(E)$ would provide polynomial examples, once we find out what is the critical degree. Recall also that the critical degree is preserved by isomorphism.

\begin{example}\rm
If $\mathbb{D}$ is the complex disc, the Bergman space $B_p$ is the
space of all holomorphic functions in $L_p(\mathbb{D},dxdy)$. If
$1<p<\infty$, $B_p$ is isomorphic to $\ell_p$ \cite[Theorem
III.A.11]{Woj} and so $cd(B_p)=cd(\ell_p)$. By \cite[Corollary 4.8]{KW}, $\mathcal{K}(B_p)$
is an $M$-ideal in $\mathcal{L}(B_p)$. Thus, $\mathcal{P}_w(^nB_p)$
is an $M$-ideal in $\mathcal{P}(^nB_p)$, for $p\leq n<p+1$.
\end{example}

\begin{example}\rm
By \cite[Corollary VI.6.12]{HWW}, an Orlicz sequence space $h_M$ can
be renormed to a space $E$ for which $\mathcal{K}(E)$ is  an
$M$-ideal in $\mathcal{L}(E)$ if and only if $(h_M)^*$ is separable.
Also, $\mathcal{P}_w(^kh_M)=\mathcal{P}(^kh_M)$ if $k<\alpha_M$ and
$\mathcal{P}_w(^kh_M)\not=\mathcal{P}(^kh_M)$ if $k>\beta_M$ (see
\cite{GJ}), where $\alpha_M$ and $\beta_M$ are the lower and upper
Boyd indexes associated to $M$. So, for certain values of $\alpha_M$ and $\beta_M$ the critical degree can be establish. Then, if $(h_M)^*$ is separable and
$n$ is the critical degree,
$\mathcal{P}_w(^nE)$ is  an $M$-ideal in $\mathcal{P}(^nE)$.
\end{example}

\section{Block diagonal polynomials}

In \cite{DG} it is introduced the concept of ``block diagonal
polynomials'' for spaces with unconditional finite dimensional
decomposition and it is studied the relationship between the
equality $\mathcal{P}(^kE)=\mathcal{P}_{w}(^kE)$ and that the same
happens for block diagonal polynomials. We want here to face the
problem  of whether a space of block diagonal polynomials that are
weakly continuous on bounded sets is an $M$-ideal in the space of
all block diagonal polynomials. First, recall the definition:

\begin{definition}
Let $E$ be a Banach space with an unconditional finite dimensional
decomposition with associate projections $\{\pi_m\}_m$ and let
$J=\{m_j\}_j$ be an increasing sequence of positive integers. For
each $j\in\mathbb{N}$, let $\sigma_j=\pi_{m_j}-\pi_{m_{j-1}}$. The
class of {\bf block diagonal $n$-homogeneous polynomials with
respect to $J$} is the set
$$
\mathcal{D}_J(^nE)=\Big\{P\in\mathcal{P}(^nE):
P(x)=\sum_{j=1}^\infty P(\sigma_j(x)),\ \forall x\in E\Big\}.
$$
\end{definition}

Observe that if $J=\mathbb{N}$ and $E$ has an unconditional basis, then $\mathcal{D}_J(^nE)$ is the space of $n$-homogeneous diagonal polynomials. Also, if $E$ is a real Banach space and it has a 1-unconditional basis, diagonal polynomials coincide with orthogonally additive polynomials defined on a Banach lattice.

We want to state conditions that assure that, for a fixed sequence
$J$, the space $\mathcal{D}_{J,w}(^nE)$ is an $M$-ideal in
$\mathcal{D}_{J}(^nE)$. Note that this problem makes sense, because
by \cite[Proposition 1.6]{DG}, for spaces $E$ with shrinking
unconditional finite dimensional decomposition, the existence of a
polynomial which is not weakly continuous on bounded sets implies
the existence of a block diagonal polynomial with respect to some
$J$ which is not weakly continuous on bounded sets. And we have more values of $n$ where to look for $M$-structures (not just a critical degree), because, by
\cite[Proposition 13]{AD},
$\mathcal{D}_{J,w}(^kE)=\mathcal{D}_{J,w0}(^kE)$, for every $k$.

Observe that, if $E$ has unconditional finite dimensional
decomposition with associate projections $\{\pi_m\}_m$ and
$J=\{m_j\}_j$, then for $Q\in\mathcal{D}_J(^nE)$, the polynomial
$$
P(x)=Q(x)-Q(x-\pi_{m_j}x)=Q(\pi_{m_j}x)
$$
is also in $\mathcal{D}_J(^nE)$. So, the proof of Proposition
\ref{condition} easily implies the following proposition.

\begin{proposition}
Let $E$ be a Banach space with a shrinking unconditional finite
dimensional decomposition with associate projections $\{\pi_m\}_m$
and let $J=\{m_j\}_j$ be an increasing sequence of positive
integers. Suppose that for all $\varepsilon >0$ and all
$j_0\in\mathbb{N}$ there exists $j>j_0$ such that for every $x\in
E$,
$$
\|\pi_{m_j} x\|^n + \|x-\pi_{m_j} x\|^n \leq (1+\varepsilon)
\|x\|^n.
$$
Then, for every $k\geq n$, $\mathcal{D}_{J,w}(^kE)$ is an $M$-ideal
in $\mathcal{D}_J(^kE)$.
\end{proposition}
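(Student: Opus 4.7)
The plan is to verify the 3-ball property for $\mathcal{D}_{J,w}(^kE) \subset \mathcal{D}_J(^kE)$ following the strategy used in Proposition \ref{condition}, with the finite-rank partial-sum projections $\pi_{m_j}$ playing the role of the compact operators $K_\alpha$. The main new ingredient is the observation made in the paragraph preceding the statement: for $Q \in \mathcal{D}_J(^kE)$ one has $Q(x) - Q(x - \pi_{m_j}x) = Q(\pi_{m_j}x)$, and this polynomial is itself block diagonal.

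First I would note that the hypothesis, although stated for exponent $n$, automatically yields the analogous statement for every $k \geq n$: from $\|\pi_{m_j}x\|^n + \|x-\pi_{m_j}x\|^n \leq (1+\delta)\|x\|^n$ one deduces $\|\pi_{m_j}x\|, \|x-\pi_{m_j}x\| \leq (1+\delta)^{1/n}\|x\|$, so
$$
\|\pi_{m_j}x\|^k + \|x-\pi_{m_j}x\|^k \leq (1+\delta)^{k/n}\|x\|^k.
$$
Thus, given $P_1, P_2, P_3 \in B_{\mathcal{D}_{J,w}(^kE)}$, $Q \in B_{\mathcal{D}_J(^kE)}$ and $\varepsilon > 0$, I would fix $j$ large enough so that simultaneously $\|P_i - P_i \circ \pi_{m_j}\| \leq \varepsilon/2$ for $i=1,2,3$ (possible by Lemma \ref{Pw} applied to $\{\pi_{m_j}\}_j$, since the FDD is shrinking and hence $\pi_m^*\gamma \to \gamma$ for every $\gamma \in E^*$) and $\|\pi_{m_j}x\|^k + \|x-\pi_{m_j}x\|^k \leq (1+\varepsilon/2)\|x\|^k$ for every $x \in E$.

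Next I would set $P(x) = Q(x) - Q(x - \pi_{m_j}x) = Q(\pi_{m_j}x)$ and verify that $P \in \mathcal{D}_{J,w}(^kE)$. Weak continuity on bounded sets is immediate from the finite-rank character of $\pi_{m_j}$. Block diagonality follows because $\pi_{m_j}\sigma_{j'}$ equals $\sigma_{j'}$ for $j' \leq j$ and $0$ otherwise, whence
$$
\sum_{j'=1}^{\infty} P(\sigma_{j'}x) = \sum_{j'=1}^{j} Q(\sigma_{j'}x) = Q(\pi_{m_j}x) = P(x).
$$

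Finally I would close the 3-ball estimate by writing
$$
\|Q + P_i - P\| \leq \|Q + P_i \circ \pi_{m_j} - P\| + \|P_i - P_i \circ \pi_{m_j}\| \leq \|Q + P_i \circ \pi_{m_j} - P\| + \frac{\varepsilon}{2},
$$
and observing that $(Q + P_i \circ \pi_{m_j} - P)(x) = P_i(\pi_{m_j}x) + Q(x - \pi_{m_j}x)$, whose absolute value is bounded by $\|\pi_{m_j}x\|^k + \|x - \pi_{m_j}x\|^k \leq (1+\varepsilon/2)\|x\|^k$. The only real point requiring care is the bookkeeping that keeps $P$ inside the block diagonal class; beyond that, the argument is essentially a transcription of the one for Proposition \ref{condition}, with the extra bootstrapping from exponent $n$ to arbitrary $k \geq n$ described above.
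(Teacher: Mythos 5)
Your proof is correct and is essentially the paper's own argument: the paper makes exactly your key observation that $Q(x)-Q(x-\pi_{m_j}x)=Q(\pi_{m_j}x)$ remains block diagonal (and, being of finite rank composition, is weakly continuous on bounded sets) and then runs the proof of Proposition \ref{condition} with the projections $\pi_{m_j}$ in place of the $K_\alpha$, just as you do. One small repair: the passage from exponent $n$ to exponent $k$ does not follow from the individual bounds $\|\pi_{m_j}x\|,\ \|x-\pi_{m_j}x\|\le(1+\delta)^{1/n}\|x\|$ alone (those only give the estimate with an extra factor of $2$); instead use that $t\mapsto t^{k/n}$ is superadditive for $k\ge n$, so that
$\|\pi_{m_j}x\|^k+\|x-\pi_{m_j}x\|^k\le\bigl(\|\pi_{m_j}x\|^n+\|x-\pi_{m_j}x\|^n\bigr)^{k/n}\le(1+\delta)^{k/n}\|x\|^k$.
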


\begin{example}\rm
Let $H$ be a separable Hilbert space. Then, for every
$J\subset\mathbb{N}$ and every $n\geq 2$, $\mathcal{D}_{J,w}(^nH)$
is an $M$-ideal in $\mathcal{D}_J(^nH)$.
\end{example}

\begin{example}\rm
Let $E=\bigoplus_{\ell_p}X_m$, where each $X_m$ is a finite
dimensional space and $1<p<\infty$. Then, for every
$J\subset\mathbb{N}$ and every $n\geq p$, $\mathcal{D}_{J,w}(^nE)$
is an $M$-ideal in $\mathcal{D}_J(^nE)$. Recall that, for $n<p$,
$\mathcal{D}_{J,w}(^nE)=\mathcal{D}_J(^nE)$.

\noindent In particular, we have that $\mathcal{D}_{J,w}(^n\ell_p)$
is an $M$-ideal in $\mathcal{D}_J(^n\ell_p)$, for every $n\geq p$.
\end{example}

\begin{example}\rm
Let $E=d^*(w,p)$, with $1<p<\infty$ and $J\subset\mathbb{N}$. Then,
$\mathcal{D}_{J,w}(^n d^*(w,p))$ is an $M$-ideal in
$\mathcal{D}_J(^n d^*(w,p))$, for every $n\geq p^*$. Recall that
$\mathcal{D}_{J,w}(^n d^*(w,p))=\mathcal{D}_J(^n d^*(w,p))$ for
$n<p^*$ and $w\not\in\ell_s$, where
$s=\Big(\frac{(n-1)^*}{p}\Big)^*$. For $n<p^*$ and $w\in\ell_s$ we
do not know if there is an $M$-ideal structure in this space.
\end{example}

In the case that the unconditional constant of the decomposition
equals 1, it is easy to see that the following holds:
\begin{itemize}
\item $\|\pi_n\|=1$, for every $n\in\mathbb{N}$.

\item $\|Id-2\pi_n\|=1$, for every $n\in\mathbb{N}$.
\end{itemize}

In this situation we have more results about $M$-structure for block
diagonal polynomials. First, a block diagonal version of Proposition
\ref{M-sumando}.

\begin{proposition}
Let $E$ be a Banach space with a 1-unconditional finite dimensional
decomposition with associate projections $\{\pi_m\}_m$ and let
$J=\{m_j\}_j$ be an increasing sequence of positive integers. If
$\mathcal{D}_{J,w}(^nE)$ is an $M$-summand in $\mathcal{D}_J(^nE)$
then $\mathcal{D}_{J,w}(^nE)=\mathcal{D}_J(^nE)$.
\end{proposition}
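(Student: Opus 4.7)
The plan is to mimic the proof of Proposition \ref{M-sumando} for the block diagonal setting, by showing that if $\mathcal{D}_J(^nE) = \mathcal{D}_{J,w}(^nE) \oplus_\infty \widehat{J}$, then necessarily $\widehat{J} = \{0\}$. I would work through the evaluation functionals $e_{x_0}: Q \mapsto Q(x_0)$, for $x_0 \in E$, viewed as elements of $\mathcal{D}_J(^nE)^*$. Recall that for any $M$-summand decomposition $X = Y \oplus_\infty Z$ one has $Y^\sharp = Z^\perp$ (dualize to $X^* = Y^* \oplus_1 Z^*$ and identify $Y^* \simeq Z^\perp$). So it suffices to prove that each $e_{x_0}$ belongs to $\mathcal{D}_{J,w}(^nE)^\sharp$, i.e.\ that
$$
\|e_{x_0}\|_{\mathcal{D}_J(^nE)^*} \;=\; \|e_{x_0}|_{\mathcal{D}_{J,w}(^nE)}\|.
$$

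The key step is a truncation argument using the 1-unconditional assumption. Given $P \in \mathcal{D}_J(^nE)$ with $\|P\| \leq 1$ and any $j \in \mathbb{N}$, set
$$
P_j(x) \;=\; P(\pi_{m_j} x) \;=\; \sum_{k=1}^{j} P(\sigma_k x).
$$
Clearly $P_j \in \mathcal{D}_J(^nE)$; it is also weakly continuous on bounded sets because $\pi_{m_j}$ has finite-dimensional range, so bounded weak convergence is carried by $\pi_{m_j}$ to norm convergence and then by continuity of $P$ to convergence of scalars. Because the FDD is 1-unconditional, $\|\pi_{m_j}\| = 1$, whence $\|P_j\| \leq \|P\| \leq 1$, so $P_j \in B_{\mathcal{D}_{J,w}(^nE)}$. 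Since $\pi_{m_j} x_0 \to x_0$ in norm as $j \to \infty$, continuity of $P$ gives $P_j(x_0) \to P(x_0)$. Taking suprema,
$$
|P(x_0)| \;\leq\; \sup\bigl\{|Q(x_0)| : Q \in B_{\mathcal{D}_{J,w}(^nE)}\bigr\} \;=\; \|e_{x_0}|_{\mathcal{D}_{J,w}(^nE)}\|,
$$
which yields the equality of norms.

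With this in hand the conclusion is immediate: every $e_{x_0}$ lies in $\mathcal{D}_{J,w}(^nE)^\sharp = \widehat{J}^\perp$, so $Q(x_0) = e_{x_0}(Q) = 0$ for all $Q \in \widehat{J}$ and all $x_0 \in E$. Hence $\widehat{J} = \{0\}$ and $\mathcal{D}_{J,w}(^nE) = \mathcal{D}_J(^nE)$. The main technical point to verify carefully is that $P_j$ is indeed weakly continuous on bounded sets and that $\|P_j\| \leq 1$; both rely on the 1-unconditional hypothesis via $\|\pi_{m_j}\| = 1$ and the finite-rank nature of $\pi_{m_j}$. The rest is the standard $M$-summand duality $Y^\sharp = Z^\perp$.
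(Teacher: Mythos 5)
Your proof is correct. It shares its engine with the paper's argument --- the truncation $P\circ\pi_{m_j}$, which is block diagonal, weakly continuous on bounded sets because $\pi_{m_j}$ has finite rank, of norm at most $\|P\|$ by 1-unconditionality, and pointwise convergent to $P$ --- but the two proofs extract the conclusion differently. The paper works in the primal: it takes $P$ in the complementary summand $S$ with $\|P\|=1$ and $P(x_0)>1-\varepsilon$, chooses $N$ with $\sum_{j>N}|P(\sigma_j(x_0))|<\varepsilon$, and plays the identity $\|P\circ\pi_{m_N}+P\|=\max\{\|P\circ\pi_{m_N}\|,\|P\|\}=1$ against $(P\circ\pi_{m_N}+P)(x_0)>2-3\varepsilon$ to reach a contradiction. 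You work in the dual: the same truncation shows that every evaluation functional $e_{x_0}$ satisfies $\|e_{x_0}\|=\|e_{x_0}|_{\mathcal{D}_{J,w}(^nE)}\|$, and since for a decomposition $X=Y\oplus_\infty Z$ the set $\{x^*:\|x^*\|=\|x^*|_Y\|\}$ is exactly $Z^\perp$, every member of the complementary summand is annihilated by all point evaluations and hence vanishes. Your route isolates a fact of independent interest that uses no $M$-structure at all (on a space with 1-unconditional FDD, each $e_{x_0}$ already has the same norm on $\mathcal{D}_{J,w}(^nE)$ as on $\mathcal{D}_J(^nE)$), while the paper's is slightly more economical, applying the truncation to a single polynomial at a single near-norming point. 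The steps you flag as needing care ($\|P_j\|\le 1$ via $\|\pi_{m_j}\|=1$, weak continuity of $P_j$ on bounded sets, and the identification of $Y^\sharp$ with $Z^\perp$ for $M$-summands) all hold, so the argument is complete.
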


\begin{proof}
Suppose that $\mathcal{D}_J(^nE)=\mathcal{D}_{J,w}(^nE)
\oplus_\infty S$, where $S\not= \{0\}$. For a given $\varepsilon
>0$, let $P\in S$ and $x_0\in B_E$ such that  $\|P\|=1$ and
$P(x_0)>1-\varepsilon$.

Since $P(x_0)=\sum_{j=1}^\infty P(\sigma_j(x_0))$ and this sum is
absolutely convergent it should exists $N\in \mathbb{N}$ such that
$$
\sum_{j=N+1}^\infty \left|P(\sigma_j(x_0))\right| <\varepsilon.
$$

The polynomial $Q=P\circ \pi_{m_N}$ belongs to
$\mathcal{D}_{J,w}(^nE)$ and $\|Q\|\leq 1$, so $\|Q+P\|=
\max\{\|Q\|, \|P\|\}=1$. But,
$$
(Q+P)(x_0)=\sum_{j=1}^N P(\sigma_j(x_0))+\sum_{j=1}^\infty
P(\sigma_j(x_0))=2P(x_0)-\sum_{j=N+1}^\infty
P(\sigma_j(x_0))>2-3\varepsilon,
$$
which is a contradiction.
\end{proof}

In order to obtain a condition for $\mathcal{D}_{J,w}(^nE)$ to be an
$M$-ideal in $\mathcal{D}_J(^nE)$ involving property $(M)$, we
present the following variation of Proposition \ref{M}.

\begin{proposition}
Let $E$ be a Banach space with a shrinking 1-unconditional finite
dimensional decomposition with associate projections $\{\pi_m\}_m$
and let $J=\{m_j\}_j$ be an increasing sequence of positive
integers. If $E$ has property $(M)$, then the following holds: for
every $P\in\mathcal{D}_J(^nE)$ with $\|P\|\leq 1$, for every
sequence $\{x_k\}_k\in B_E$ and every sequence of scalars
$\{\lambda_k\}_k$ such that $|\lambda_k|\leq \|\pi_{m_{j_0}}x_k\|^n$,
(for a fixed index $j_0$), it follows that
$$
\limsup_k |\lambda_k+P(x_k-\pi_kx_k)|\leq \limsup_k
\|\pi_{m_{j_0}}x_k+x_k-\pi_kx_k\|^n.
$$
\end{proposition}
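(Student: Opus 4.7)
The plan is to adapt the strategy of Proposition \ref{M}, but replace the vanishing of cross terms (which there came from $\mathcal{P}(^kE)=\mathcal{P}_w(^kE)$ for $k<n$) by the exact block-disjointness that block-diagonal polynomials enjoy, combined with a weak-null tail argument provided by the shrinking FDD. First I would set $v_k:=\pi_{m_{j_0}}x_k$ and $y_k:=x_k-\pi_k x_k$. Since the FDD is shrinking, $\pi_k^*\gamma\to\gamma$ for every $\gamma\in E^*$, so $\gamma(y_k)=(\gamma-\pi_k^*\gamma)(x_k)\to 0$, which means $\{y_k\}$ is bounded and weakly null. On the other hand $\{v_k\}$ lies in the bounded subset of the finite-dimensional space $\pi_{m_{j_0}}(E)$ (and $\|v_k\|\le 1$ by 1-unconditionality), so it is relatively compact. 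As in Proposition \ref{M}, I may assume $\|P\|=1$ and work in the complex case; the real case is handled by the same sign-flipping trick as there.

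Fix $\varepsilon>0$. I would pick $x^*\in S_E$ with $|P(x^*)|>1-\varepsilon/2$ and set $w_k:=\pi_k x^*$. Since $w_k\to x^*$ in norm, for all large $k$ one has $|P(w_k)|>1-\varepsilon$, so I can choose $t_k\in\mathbb{C}$ with $t_k^n=\lambda_k/P(w_k)$; then $u_k:=t_k w_k$ satisfies $P(u_k)=\lambda_k$ and $\|u_k\|=|t_k|\le(1-\varepsilon)^{-1/n}\|v_k\|$. Crucially, $u_k\in\pi_k(E)$ while $y_k\in(I-\pi_k)(E)$.

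The core estimate is $P(u_k+y_k)=\lambda_k+P(y_k)+o(1)$. Because $P$ is block-diagonal with respect to $J$, the symmetric $n$-linear form satisfies $\check{P}(z_1,\dots,z_n)=\sum_i\check{P}(\sigma_i z_1,\dots,\sigma_i z_n)$, so the only $J$-block that can contribute to any cross term $\check{P}(u_k^j,y_k^{n-j})$ with $1\le j\le n-1$ is the (at most one) index $i_k$ satisfying $m_{i_k-1}<k<m_{i_k}$ (and if $k\in J$ every cross term vanishes identically). A direct computation gives $\sigma_{i_k}u_k=t_k(\pi_k-\pi_{m_{i_k-1}})x^*$; since $m_{i_k-1}\to\infty$ as $k\to\infty$, the shrinking property of the FDD forces $\|\sigma_{i_k}u_k\|\to 0$, while $\|\sigma_{i_k}y_k\|$ stays bounded. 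Hence every cross term tends to $0$ and $|\lambda_k+P(y_k)|\le\|u_k+y_k\|^n+o(1)$.

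To close, I would pass to a subsequence $\{k_l\}$ along which $|\lambda_k+P(y_k)|$ attains its $\limsup$ and, using relative compactness, further to one on which $v_{k_l}\to v^*$ and $t_{k_l}\to t^*$, so $u_{k_l}\to t^*x^*$ in norm with $|t^*|\le(1-\varepsilon)^{-1/n}\|v^*\|$. Applying property $(M)$ of $E$ to the fixed vectors $t^*x^*$ and $(1-\varepsilon)^{-1/n}v^*$ against the weakly null sequence $\{y_{k_l}\}$, together with a routine triangle-inequality perturbation from $v^*$ back to $v_{k_l}$, yields $\limsup_l\|u_{k_l}+y_{k_l}\|\le\limsup_k\|v_k+y_k\|+\delta(\varepsilon)$ with $\delta(\varepsilon)\to 0$. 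Raising to the $n$-th power and letting $\varepsilon\to 0$ delivers the inequality. The main obstacle I anticipate is the cross-term analysis: block-diagonality localizes the cross contribution to a single ``straddling'' $J$-block, but one genuinely needs the shrinking FDD (not merely the weak nullity of $y_k$) to annihilate that last block in the limit.
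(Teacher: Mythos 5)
Your argument is correct and shares the paper's skeleton: replace $\lambda_k$ by $P(u_k)$ for a small multiple $u_k$ of a near-norming point, use block-diagonality to turn $P(u_k)+P(y_k)$ into $P(u_k+y_k)$, bound this by $\|u_k+y_k\|^n$, and finish with property $(M)$ applied to a relatively compact net against the weakly null $\{x_k-\pi_kx_k\}$. The one place you diverge is the disjointness mechanism. The paper truncates the norming point $z_\varepsilon$ at a \emph{fixed} block level: since $\sum_j P(\sigma_j z_\varepsilon)$ converges absolutely, one picks $N$ with $\sum_{j>N}|P(\sigma_j z_\varepsilon)|<\varepsilon$ and replaces $u_k^\varepsilon=\lambda_k^{1/n}z_\varepsilon$ by $\pi_{m_N}u_k^\varepsilon$ at the cost of $\varepsilon$; then for $k\ge m_N$ the vectors $\pi_{m_N}u_k^\varepsilon$ and $x_k-\pi_kx_k$ are supported on disjoint $J$-blocks, so $P$ is \emph{exactly} additive on their sum and there are no cross terms to estimate. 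You instead truncate at the moving level $k$ (taking $u_k=t_k\pi_kx^*$), which forces you to control the single straddling block $\sigma_{i_k}$; your estimate is fine, though note that $\|\sigma_{i_k}u_k\|\to0$ uses only the norm convergence $\pi_mx^*\to x^*$, valid for any FDD --- the shrinking hypothesis is what makes $\{x_k-\pi_kx_k\}$ weakly null, not what kills the straddling block. The paper's fixed-level truncation is slightly cleaner (no multilinear cross-term estimate needed), while your normalization of $t_k$ gives $P(u_k)=\lambda_k$ exactly rather than $\lambda_k(1-\varepsilon)$; these are cosmetic trade-offs, and both routes end with the same application of property $(M)$ along a convergent subsequence of $\{\pi_{m_{j_0}}x_k\}$, with the real case and the case $\|P\|<1$ handled exactly as in Proposition \ref{M}.
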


\begin{proof}
Suppose first that $\|P\|=1$ and that $E$ is a complex Banach space.
Given $\varepsilon>0$, there exists $z_\varepsilon\in B_E$ such that
$P(z_\varepsilon)=1-\varepsilon$.

Let, for every $k\in\mathbb{N}$,
$u_k^\varepsilon=\lambda_k^{1/n}z_\varepsilon$. So,
$P(u_k^\varepsilon)=\lambda_k(1-\varepsilon)$ and
$\|u_k^\varepsilon\|\leq \|\pi_{m_{j_0}}x_k\|$. Thus,
$$
\limsup_k |\lambda_k+P(x_k-\pi_kx_k)|\leq  \limsup_k
|P(u_k^\varepsilon)+P(x_k-\pi_kx_k)| + |\lambda_k|\varepsilon.
$$
Since the series $\sum_j P(\sigma_j(z_\varepsilon))$ converges
absolutely, there exists $N\in\mathbb{N}$ such that
$\sum_{j=N+1}^\infty |P(\sigma_j(z_\varepsilon))|<  \varepsilon$.
Thus,
$$
\left|P(u_k^\varepsilon)-P(\pi_{m_N}u_k^\varepsilon)\right|<
|\lambda_k|\varepsilon\leq \varepsilon,
$$
because $|\lambda_k|\leq 1$.

This implies that
\begin{eqnarray*}
\limsup_k |\lambda_k+P(x_k-\pi_kx_k)|&\leq & \limsup_k
|P(\pi_{m_N}u_k^\varepsilon)+P(x_k-\pi_kx_k)| + 2\varepsilon\\
&=& \limsup_k
|P(\pi_{m_N}u_k^\varepsilon + x_k-\pi_kx_k)| + 2\varepsilon\\
&\leq & \limsup_k \|\pi_{m_N}u_k^\varepsilon + x_k-\pi_kx_k\|^n + 2\varepsilon\\
&\leq & \limsup_k \|\pi_{m_{j_0}}x_k + x_k-\pi_kx_k\|^n +
2\varepsilon,
\end{eqnarray*}
where the last step follows from property $(M)$ since the sequence
$\{x_k-\pi_kx_k\}_k$ is weakly null and
$\|\pi_{m_N}u_k^\varepsilon\|\leq \|u_k^\varepsilon\|\leq
\|\pi_{m_{j_0}}x_k\|$.

Since this happens for arbitrary $\varepsilon$, the result follows. If $E$
is real or $\|P\|<1$, we can repeat the argument of the proof of
Proposition \ref{M} to obtain the desired result.
\end{proof}

\begin{theorem}
Let $E$ be a Banach space with a shrinking 1-unconditional finite
dimensional decomposition with associate projections $\{\pi_m\}_m$
and let $J=\{m_j\}_j$ be an increasing sequence of positive
integers. If $E$ has property $(M)$, then, for every
$n\in\mathbb{N}$, $\mathcal{D}_{J,w}(^nE)$ is an $M$-ideal in
$\mathcal{D}_J(^nE)$.
\end{theorem}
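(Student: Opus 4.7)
The plan is to verify the 3-ball property. Fix $P_1,P_2,P_3\in B_{\mathcal{D}_{J,w}(^nE)}$, $Q\in B_{\mathcal{D}_J(^nE)}$ and $\varepsilon>0$. Since the finite dimensional decomposition is shrinking, the projections satisfy $\pi_m^*\gamma\to\gamma$ for every $\gamma\in E^*$, and Lemma \ref{Pw} then gives $\|P_i-P_i\circ\pi_m\|\to 0$. I would first pick $j_0$ large enough so that $\|P_i-P_i\circ\pi_{m_{j_0}}\|\leq\varepsilon/2$ for $i=1,2,3$, and for each $j\geq j_0$ take as candidate approximant $R_j:=Q\circ\pi_{m_j}$. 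By the observation recorded in the excerpt just before the previous proposition, $R_j\in \mathcal{D}_J(^nE)$, and because $\pi_{m_j}$ has finite rank, $R_j$ factors through a finite dimensional space, hence is weakly continuous on bounded sets, so $R_j\in\mathcal{D}_{J,w}(^nE)$.

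Next, using that $Q$ is block diagonal to decompose $Q=Q\circ\pi_{m_j}+Q\circ(Id-\pi_{m_j})$, I would note
$$
\|Q+P_i-R_j\|=\|P_i+Q\circ(Id-\pi_{m_j})\|\leq \|P_i\circ\pi_{m_{j_0}}+Q\circ(Id-\pi_{m_j})\|+\tfrac{\varepsilon}{2}.
$$
The goal is reduced to finding a single $j\geq j_0$ for which $\|P_i\circ\pi_{m_{j_0}}+Q\circ(Id-\pi_{m_j})\|\leq 1+\varepsilon/2$ holds simultaneously for $i=1,2,3$. For each $i$, pick $x_j^{(i)}\in B_E$ approximating the supremum defining this norm, and set $\lambda_j^{(i)}:=P_i(\pi_{m_{j_0}}x_j^{(i)})$; since $\|P_i\|\leq 1$, one has $|\lambda_j^{(i)}|\leq \|\pi_{m_{j_0}}x_j^{(i)}\|^n$. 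Applying the preceding proposition to $P=Q$, the sequence $(x_j^{(i)})_j$, and the scalars $(\lambda_j^{(i)})_j$, but reading its argument with $\pi_{m_j}$ in place of $\pi_j$ (which is legitimate because the proof there uses only that the relevant residual vectors are weakly null, a condition that survives for any subsequence of projections in a shrinking FDD), yields
$$
\limsup_j |\lambda_j^{(i)}+Q(x_j^{(i)}-\pi_{m_j}x_j^{(i)})|\leq \limsup_j \|\pi_{m_{j_0}}x_j^{(i)}+x_j^{(i)}-\pi_{m_j}x_j^{(i)}\|^n.
$$

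The 1-unconditionality of the decomposition now provides the final bound: for $j>j_0$, the operator $\pi_{m_{j_0}}+(Id-\pi_{m_j})$ is the projection onto the union of the first $m_{j_0}$ blocks and the blocks past index $m_j$, which is a band projection of norm $1$, so $\|\pi_{m_{j_0}}x_j^{(i)}+x_j^{(i)}-\pi_{m_j}x_j^{(i)}\|\leq \|x_j^{(i)}\|\leq 1$. Consequently $\limsup_j\|P_i\circ\pi_{m_{j_0}}+Q\circ(Id-\pi_{m_j})\|\leq 1$, and along a suitable subsequence of $j$'s the required bound holds for $i=1$; iterating the subsequence extraction over $i=2,3$ produces a common $j$, completing the 3-ball property. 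The main technical worry is the transfer of the preceding proposition from $(\pi_k)$ to the block subsequence $(\pi_{m_j})$; I would address this by observing that its proof is invariant under such reindexing, rather than by restating the proposition for subsequences.
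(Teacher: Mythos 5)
Your proposal is correct and follows essentially the same route as the paper: the approximant $Q\circ\pi_{m_j}$, the reduction via $\|P_i-P_i\circ\pi_{m_{j_0}}\|$ small, the block-diagonal identity $Q(x)-Q(\pi_{m_j}x)=Q(x-\pi_{m_j}x)$, the application of the preceding proposition with $P=Q$, the norm-one bound $\|\pi_{m_{j_0}}+Id-\pi_{m_j}\|\leq 1$ from 1-unconditionality, and the final subnet extraction for $i=1,2,3$. Your remark about reading the preceding proposition with $\pi_{m_j}$ in place of $\pi_j$ is a fair and correctly resolved point about the indexing in its statement.
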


\begin{proof}
Let us see that the 3-ball property is satisfied. Let
$P_1,P_2,P_3\in B_{\mathcal{D}_{J,w}(^nE)}$, $Q\in
B_{\mathcal{D}_{J}(^nE)}$, $\varepsilon>0$. We want to show that,
for $j$ large enough, the polynomial
$P=Q\circ\pi_{m_j}\in\mathcal{D}_{J,w}(^nE)$ satisfies
$\|Q+P_i-P\|\leq 1+\varepsilon$, for $i=1,2,3$.

Since $P_i-P_i\circ\pi_m \underset{m\to\infty}{\longrightarrow}0$,
there is $j_0\in\mathbb{N}$ such that
$$
\|P_i-P_i\circ\pi_{m_{j_0}}\|\leq \varepsilon, \quad \textrm{for
}i=1,2,3.
$$

Now we have,
$$
\|Q+P_i-P\|\leq \|Q+P_i\circ\pi_{m_{j_0}}-P\|+
\|P_i-P_i\circ\pi_{m_{j_0}}\| \leq
\|Q+P_i\circ\pi_{m_{j_0}}-Q\circ\pi_{m_j}\|+\varepsilon.
$$

Let $\{x_j\}_j\subset B_E$ such that

$$
\limsup_j \|Q+P_i\circ\pi_{m_{j_0}}-Q\circ\pi_{m_j}\|=\limsup_j
|Q(x_j)+(P_i\circ\pi_{m_{j_0}})(x_j)-(Q\circ\pi_{m_j})(x_j)|.
$$

Since the polynomial $Q$ is block diagonal with respect to $J$, it
holds that $Q(x_j)-Q(\pi_{m_j}(x_j))= Q(x_j-\pi_{m_j}(x_j))$. From
this and the previous proposition we have,

\begin{eqnarray*}
\limsup_j \|Q+P_i\circ\pi_{m_{j_0}}-Q\circ\pi_{m_j}\| & = &
\limsup_j |P_i(\pi_{m_{j_0}}(x_j))-Q(x_j-\pi_{m_j}(x_j))|\\
&\leq & \limsup_j \|\pi_{m_{j_0}}(x_j)+ x_j-\pi_{m_j}(x_j)\|^n\\
&\leq & \limsup_j \|\pi_{m_{j_0}}+ Id-\pi_{m_j}\|^n =1.
\end{eqnarray*}

With final considerations as in the proof of Theorem \ref{n-prop-M},
the result is proved.

\end{proof}

\section*{Acknowledgements}

The author wishes to thank Silvia Lassalle and Daniel Carando for
helpful conversations and suggestions.


\begin{thebibliography}{10}

\bibitem{Ac}
Acosta, Mar\'{\i}a D. Denseness of norm attaining mappings. RACSAM
Rev. R. Acad. Cienc. Exactas Fís. Nat. Ser. A Mat. 100 (2006), no.
1-2, 9--30.

\bibitem{AAP}
Acosta, Mar\'{\i}a D.; Aguirre, Francisco J.; Pay\'{a}, Rafael.
There is no bilinear Bishop-Phelps theorem. Israel J. Math. 93
(1996), 221--227.

\bibitem{AK} Albiac, Fernando; Kalton, Nigel J. Topics in Banach space theory. Graduate Texts in Mathematics, 233. Springer, New York, 2006.

\bibitem{AAD}
Alencar, Raymundo; Aron, Richard M.; Dineen, Se\'{a}n. A reflexive
space of holomorphic functions in infinitely many variables. Proc.
Amer. Math. Soc. 90 (1984), no. 3, 407--411.

\bibitem{AE}
Alfsen, Erik; Effros, Edward. Structure in real Banach spaces. I,
II. Ann. of Math. (2) 96 (1972), 98--128; ibid. (2) 96 (1972),
129--173.

\bibitem{Al}
Alzer, Horst. On some inequalities for the gamma and psi functions. Math. Comp. 66 (1997), no. 217, 373--389.


\bibitem{AD}
Aron, Richard; Dimant, Ver\'{o}nica. Sets of weak sequential
continuity for polynomials. Indag. Math. (N.S.) 13 (2002), no. 3,
287--299.

\bibitem{AGM}
Aron, Richard; García, Domingo; Maestre, Manuel. On norm attaining
polynomials. Publ. Res. Inst. Math. Sci. 39 (2003), no. 1, 165--172.

\bibitem{AHV} Aron, Richard; Herv\'es, Carlos; Valdivia, Manuel. Weakly
continuous mappings on Banach spaces. J. Funct. Anal. {\bf 52}
(1983), 189-204.

\bibitem{BR-98}
Boyd, Christopher; Ryan, Raymond. A. Bounded weak continuity of
homogeneous polynomials at the origin. Arch. Math. (Basel) 71
(1998), no. 3, 211--218.

\bibitem{BR-01}
Boyd, Christopher; Ryan, Raymond. Geometric theory of spaces of
integral polynomials and symmetric tensor products. J. Funct. Anal.
179 (2001), no. 1, 18--42.

\bibitem{CD}
Carando, Daniel; Dimant, Verónica. Duality in spaces of nuclear and
integral polynomials. J. Math. Anal. Appl. 241 (2000), no. 1,
107--121.

\bibitem{CGG}
Castillo, Jes\'{u}s M. F.; Garc\'{\i}a, Ricardo; Gonzalo, Raquel.
Banach spaces in which all multilinear forms are weakly sequentially
continuous. Studia Math. 136 (1999), no. 2, 121--145.


\bibitem{DD}
Dimant, Ver\'{o}nica; Dineen, Se\'{a}n. Banach subspaces of spaces
of holomorphic functions and related topics. Math. Scand. 83 (1998),
no. 1, 142--160.

\bibitem{DG}
Dimant, Ver\'{o}nica; Gonzalo, Raquel. Block diagonal polynomials.
Trans. Amer. Math. Soc. 353 (2001), no. 2, 733--747.

\bibitem{Di}
Dineen, Se\'{a}n. Complex analysis on infinite-dimensional spaces.
Springer Monographs in Mathematics. Springer-Verlag London, Ltd.,
London, 1999.

\bibitem{FHHMPZ} Fabian, Mari\'{a}n; Habala, Petr; H\'{a}jek, Petr; Montesinos Santaluc\'{\i}a, Vicente;
Pelant, Jan; Zizler, V\'{a}clav. Functional analysis and
infinite-dimensional geometry. CMS Books in Mathematics/Ouvrages de
Mathématiques de la SMC, 8. Springer-Verlag, New York, 2001.

\bibitem{Fa}
Farmer, Jeff D. Polynomial reflexivity in Banach spaces. Israel J.
Math. 87 (1994), no. 1-3, 257--273.

\bibitem{Flo}
Floret, Klaus. Natural norms on symmetric tensor products of normed
spaces. Proceedings of the Second International Workshop on
Functional Analysis (Trier, 1997). Note Mat. 17 (1997), 153--188
(1999).

\bibitem{Go}
Gonzalo, Raquel. Upper and lower estimates in Banach sequence
spaces. Comment. Math. Univ. Carolin. 36 (1995), no. 4, 641--653.

\bibitem{GJ}
Gonzalo, Raquel; Jaramillo, Jesús Angel. Compact polynomials between
Banach spaces. Proc. Roy. Irish Acad. Sect. A 95 (1995), no. 2,
213--226.

\bibitem{Ha}
Haagerup, Uffe. The best constants in the Khintchine inequality. Studia Math. 70 (1981), no. 3, 231--283 (1982).

\bibitem{HWW}
Harmand, Peter; Werner, Dirk; Werner, Wend. $M$-ideals in Banach
spaces and Banach algebras. Lecture Notes in Mathematics, 1547.
Springer-Verlag, Berlin, 1993.

\bibitem{H}
Hennefeld, Julien. $M$-ideals, HB-subspaces, and compact operators.
Indiana Univ. Math. J. 28 (1979), no. 6, 927--934.

\bibitem{JP}
Jim\'{e}nez Sevilla, Mar; Pay\'{a}, Rafael. Norm attaining
multilinear forms and polynomials on preduals of Lorentz sequence
spaces. Studia Math. 127 (1998), no. 2, 99--112.

\bibitem{K}
Kalton, N. J. $M$-ideals of compact operators. Illinois J. Math. 37
(1993), no. 1, 147--169.

\bibitem{KW}
Kalton, Nigel J.; Werner, Dirk. Property $(M)$, $M$-ideals, and
almost isometric structure of Banach spaces. J. Reine Angew. Math.
461 (1995), 137--178.

\bibitem{L}
Lima, \.Asvald. $M$-ideals of compact operators in classical Banach
spaces. Math. Scand. 44 (1979), no. 1, 207--217.


\bibitem{OW}
Oja, Eve; Werner, Dirk. Remarks on $M$-ideals of compact operators
on $X\oplus\sb p X$. Math. Nachr. 152 (1991), 101--111.

\bibitem{Ry-79}
Ryan, Raymond A. Dunford-Pettis properties. Bull. Acad. Polon. Sci.
Sér. Sci. Math. 27 (1979), no. 5, 373--379.

\bibitem{Ry-80}
Ryan, Raymond. Applications of topological tensor products to
infinite dimensional holomorphy. Ph. D. Thesis, Trinity College,
Dublin, 1980.

\bibitem{W}
Werner, Dirk. $M$-ideals and the ``basic inequality''. J. Approx.
Theory 76 (1994), no. 1, 21--30.

\bibitem{Woj}
Wojtaszczyk, P. Banach spaces for analysts. Cambridge Studies in
Advanced Mathematics, 25. Cambridge University Press, Cambridge,
1991.

\end{thebibliography}
\end{document}